\theoremstyle{plain}
\date{}
\newcommand{\Z}{\mathbb{Z}}
\newcommand{\N}{\mathbb{N}}
\newcommand{\Q}{\mathbb{Q}}
\newcommand{\R}{\mathbb{R}}
\newcommand{\Int}{\mbox{Int}}
\newtheorem{theorem}{Theorem}[section]
\newtheorem*{theorem*}{Theorem}
\newtheorem{corollary}[theorem]{Corollary}
\newtheorem{lemma}[theorem]{Lemma}
\theoremstyle{definition}
\newtheorem{definition}[theorem]{Definition}
\newtheorem{remark}[theorem]{Remark}
\newtheorem{conjecture}[theorem]{Conjecture}
\newtheorem*{conjecture*}{Conjecture}
\newtheorem{calculation}[theorem]{Calculation}
\begin{document}
\title{The Tetrahedral Twists}
\author{Ren Yi}
\maketitle

\begin{abstract} 
We introduce a family of piecewise isometries $f_s$ parametrized by $s \in [0,1)$ on the surface of a regular tetrahedron, which we call the tetrahedral twists. This family of maps is similar to the PETs constructed by Patrick Hooper. We study the dynamics of the tetrahedral twists through the notion of renormalization. By the assistance of computer, we conjecture that the renormalization scheme exists on the entire interval $[0,1)$. In this paper, we show that this system is renormalizable in the subintervals $[\frac{53}{128},\frac{29}{70}]$ and $[\frac{1}{2},1)$.
\end{abstract}

\section{Introduction}
\quad Piecewise isometries have rich dynamical phenomena and they sometimes produce fractal-like pictures. To define these maps, let $X$ be a subset of $\R^n$ with a finite partition $\mathcal P=\{P_1, \cdots, P_n\} (n\geq 1)$. A piecewise isometry $T: X \to X$ is a map such that the restriction of $T$ to each $\Int(P_i), i=1, \cdots,n$ is a Euclidean isometry. The map is not defined on the boundaries $\partial P_i$ for $i=1,\cdots, n$. In this paper, we introduce a one-parameter family of piecewise isometries called the \textit{tetrahedral twists}. The intuitive definition is the following:  Let $\Delta$ be the surface of a regular tetrahedron of side length 1. Pick one pair of the opposite edges of $\Delta$ and cut them open, then $\Delta$ becomes a cylinder intrinsically. Rotate the cylinder by amount $s \in [0,1)$ counterclockwise. Glue the opposite edges so that $\Delta$ becomes the surface of a tetrahedron again. Apply this procedure on the other two pairs of opposite edges of $\Delta$. The entire process defines a piecewise isometry on $\Delta$ which is called the tetrahedral twist. 

\begin{definition}
A polytope exchange transformation (PET) is a piecewise isometry $T: X \to X$ on a polytope $X$ with two conditions:
\begin{enumerate}
\item The restriction on each $\Int(P_i)$ is a translation. 
\item The image $T(X)$ has the full area in $X$. 
\end{enumerate}
\end{definition} 

The tetrahedral twist maps are not piecewise translations. However, there exist double covers $(\tilde \Delta, \pi)$ of $\Delta$ such that the liftings of the tetrahedral twists produce PETs which we call the tetrahedral PETs. We will discuss this construction in Section 2.1.

\begin{definition}
Let $Y$ be a subset of $X$. Given a map $f: X \to X$, the \textit{first return} $f|_Y: Y \to Y$ is a map assigns every point $x \in Y$ to the first point in the forward orbit of $x$ lies in $Y$ under $f$, i.e.
\[
f|_Y(x) = f^k(x) \quad \mbox{where $k=\min\{f^k(x) \in Y\}  \quad k \geq 0$}.
\]
\end{definition}

A \textit{renormalization} of a PET $T: X \to X$ is the choice of a subset $Y$ of X such that the first return map $T|_Y : Y \to Y$ is also a PET.  The existence of renormalization scheme in a dynamical system allows us to study the acceleration of the orbits. If a renormalization scheme exists, we say that the system is \textit{renormalizable}. 
\begin{conjecture*}
The family of tetrahedral PETs is renormalizable in the parameter space $[0,1)$.
\end{conjecture*}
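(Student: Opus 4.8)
\emph{Strategy (toward the conjecture).} The plan is to follow the renormalization-by-first-return scheme that Hooper used for his polygon-exchange families: for each parameter $s\in[0,1)$ one exhibits an explicit polygonal subset $Y_s$ of the phase space $\tilde\Delta$ of the tetrahedral PET such that the first return map $f_s|_{Y_s}$ is again a PET, and moreover is affinely conjugate, via a similarity of some ratio $\lambda(s)<1$, to the tetrahedral PET at a new parameter $\psi(s)$. Producing such renormalization data $(Y_s,\psi(s),\lambda(s))$ for every $s$ is exactly what it means for the family to be renormalizable on all of $[0,1)$, so the problem reduces to (i) reading off the correct domain $Y_s$ and the correct renormalization map $\psi\colon[0,1)\to[0,1)$ from the computer experiments, and (ii) verifying, for every $s$, that this guess works.

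For step (i) I would exploit the structure of $f_s$ as a composition of the three ``cylinder twists'', one per pair of opposite edges of $\Delta$: each twist is a shear/rotation of an intrinsic flat cylinder by amount $s$, so the discontinuity set of $f_s$ and of its iterates is a union of geodesic segments whose slopes are controlled by $s$, and the candidate $Y_s$ should be the region cut out by the first few of these segments on which the return combinatorics first stabilizes into a self-similar pattern. Concretely: partition $\tilde\Delta$ into the atoms of $f_s$, follow an atom of $Y_s$ under successive iterates of $f_s$ until it first returns, record the return time as a locally constant function on the atoms of $Y_s$, and read off the induced map. One then checks the two PET axioms for $f_s|_{Y_s}$ --- that each return branch is a translation in the flat coordinates (automatic, since $f_s$ is already a piecewise translation and a composition of translations is a translation; the only real point is that the return atoms are honest polygons tiling $Y_s$ up to measure zero), and that the image has full area (a measure/counting argument) --- and finally identifies $\psi(s)$ by matching the rescaled combinatorial data of $f_s|_{Y_s}$ with that of $f_{\psi(s)}$. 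This last matching is where the arithmetic of $s$, i.e. the continued-fraction-like behaviour of $\psi$, enters.

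The hard part --- and the reason the statement is only a conjecture --- is step (ii) uniformly in $s$. The domain $Y_s$, the return-time function, and the formula for $\psi$ all change combinatorial type as $s$ crosses certain algebraic thresholds, and a priori there are infinitely many such types; the computer evidence only suggests that they organize into a self-similar, finitely-described family. A complete proof on all of $[0,1)$ would therefore need either (a) a closed-form description of $\psi$ together with a proof that each of its finitely many branches transports the renormalization data correctly --- one careful polygon-bookkeeping argument per branch --- or (b) a conceptual reason, such as a hidden affine symmetry group of $\tilde\Delta$ in the spirit of a Veech group, forcing the return map to be a PET automatically. What is actually within reach, and what this paper settles for, are the two regimes named in the abstract: on $[\tfrac12,1)$ the twist amount is large enough that the return combinatorics collapses to a single easily analysed case, so renormalization there is essentially immediate; and the window $[\tfrac{53}{128},\tfrac{29}{70}]$ is a sub-interval obtained by pulling $[\tfrac12,1)$ back through finitely many steps of $\psi$, so the full renormalization tower over it involves only a bounded, hand-checkable list of combinatorial types. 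Extending beyond these to all of $[0,1)$ is precisely the obstacle I do not expect to overcome by this bookkeeping alone.
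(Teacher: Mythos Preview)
You correctly recognize that the statement is a conjecture, not a theorem, and that the paper does not prove it; your proposal is a strategy sketch rather than a proof, which is appropriate. Your overall picture --- first-return maps, a renormalization map $\psi$ on parameter space, continued-fraction combinatorics, finitely many combinatorial types per parameter window --- matches the paper's philosophy.

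However, two points in your description of the partial results are off. First, the interval $[\tfrac{53}{128},\tfrac{29}{70}]$ is \emph{not} a finite pullback of $[\tfrac12,1)$ under the renormalization map. It is a neighborhood of the fixed point $\sqrt{2}-1$ of $R$ (where $R(s)=s/(1-2s)\bmod 1$ on $[0,\tfrac12)$), and $R$ sends it into $[\tfrac{2}{5},\tfrac{5}{12}]\subset[0,\tfrac12)$, not into $[\tfrac12,1)$. The renormalization there is genuinely self-referential, which is why the paper extracts a self-similarity corollary at $s=\sqrt{2}-1$. Second, on $[\tfrac12,1)$ the paper does not do a first-return argument at all: it proves a direct conjugacy $\tilde f_s\sim\tilde f_{1-s}$ by a piecewise translation swapping the upper and lower halves of $\tilde\Delta$, so this branch of $R$ is the symmetry $s\mapsto 1-s$ rather than an induced-map scaling.

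More substantively, your verification scheme (``one careful polygon-bookkeeping argument per branch'', parameter by parameter) is not how the paper actually handles even the partial cases. The paper passes to a three-dimensional fiber bundle $\mathcal X=\{(x,y,s)\}$ over the parameter interval, so that the PETs assemble into a single piecewise-affine map $F$ on $\mathcal X$. Maximal domains of continuity of $F$ and of its first returns are then convex polyhedra with rational vertices, and the conjugacy is checked once for the whole parameter window by a finite list of polyhedron computations (vertex images under the lifted similarity, disjointness of interiors, volume bookkeeping). This is what makes the verification finite and rigorous on an interval rather than pointwise; your sketch misses this key reduction.
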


Out main goal is to show the following theorem:
\begin{theorem*}
The family of tetrahedral PETs has a renormalization scheme when the input parameter lies in the subintervals $I_1=[1/2,1)$ and $I_2=[53/128, 29/70]$. The subinterval $I_2$  of $[0,1)$ is a neighborhood of the irrational number $\sqrt 2 - 1$.
\end{theorem*}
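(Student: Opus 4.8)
The plan is to reduce renormalizability on each interval to a finite combinatorial verification, exploiting that the tetrahedral PET depends affinely on the parameter. First I would fix an explicit planar model: unfold the double cover $\tilde\Delta$ from Section 2.1 to a planar polygon $X$ tiled by equilateral pieces, and write the tetrahedral PET $T_s\colon X\to X$ as a concrete partition $\{P_i(s)\}$ together with translation vectors $\{v_i(s)\}$. In this model the vertices of the partition move along line segments as $s$ ranges over $[0,1)$ and each $v_i(s)$ is affine in $s$; consequently the combinatorial type of $T_s$ is locally constant in $s$ and changes only at finitely many parameters. This structural fact is what makes everything below a finite check.

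Second, for each of $I_1=[1/2,1)$ and $I_2=[53/128,29/70]$ I would exhibit --- with the computer assistance mentioned in the abstract --- a candidate renormalization domain $Y_s\subset X$, expected to be a similar copy of $X$ (or of a fixed sub-polytope of it) with a scaling ratio $\lambda$ independent of $s$, together with a candidate parameter map $\psi_j\colon I_j\to[0,1)$, conjecturally a M\"obius transformation, such that $T_s|_{Y_s}$ is affinely conjugate to $T_{\psi_j(s)}$ after rescaling by $1/\lambda$. The substance of the theorem is to promote these guesses to proofs.

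Third, and this is the crux, I would verify the first-return structure directly. Subdivide $Y_s=\bigcup_k Q_k(s)$ into finitely many sub-polytopes and, for each $k$, follow the $T_s$-orbit of $\Int(Q_k(s))$ piece by piece until it re-enters $Y_s$, recording the finite itinerary through the $P_i$'s. Affineness in $s$ makes each itinerary valid on a subinterval of $I_j$ cut out by finitely many linear inequalities; I would check that these subintervals cover $I_j$, that the composed translation carries each $Q_k(s)$ back inside $Y_s$, and that the resulting return map is injective with image of full area. The PET conditions then follow: the full-area condition is automatic because $T_s$ is measure preserving and $Y_s$ has positive measure (Poincar\'e recurrence / Kac's formula gives an integrable, hence almost everywhere finite, return time), and injectivity together with the recorded geometry identifies the return map with $T_{\psi_j(s)}$ up to the rescaling. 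The main obstacle is the bookkeeping: the number of pieces $Q_k$ and the return times are sizeable, they may jump at a few parameters inside $I_1$ so that $I_1$ must be split into several regimes, and the defining inequalities must be verified exactly --- which is precisely why the argument is computer-assisted, the human content being to organize the case analysis and confirm that finitely many cases suffice.

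Finally, for $I_2$ I would explain the role of $\sqrt2-1$. The renormalization there is self-referential: the parameter map $\psi_2$ is a M\"obius transformation fixing $\sqrt2-1=[0;2,2,2,\ldots]$, and $I_2=[53/128,29/70]$ is an interval about this point on which $\psi_2$ is a contraction mapping $I_2$ into itself, its endpoints being the extreme rationals (with $29/70$ a continued-fraction convergent of $\sqrt2-1$) for which the itineraries found above stay valid. Hence every $s\in I_2$ renormalizes to $T_{\psi_2(s)}$, one has $\psi_2^n(s)\to\sqrt2-1$, and $T_{\sqrt2-1}$ is the self-similar parameter at the bottom of the tower; this is the precise sense in which the scheme exists on a neighborhood of $\sqrt2-1$. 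For $I_1$ one expects the analogous picture, with $\psi_1$ carrying $[1/2,1)$ onto a larger portion of $[0,1)$, which is how these two partial results are meant to feed the conjecture that renormalization exists on all of $[0,1)$.
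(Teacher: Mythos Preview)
Your overall strategy---exploit affine dependence on $s$, lift to a three-dimensional fiber bundle, and reduce the conjugacy to a finite check on convex polyhedra---is exactly what the paper does, and your step~3 is a fair description of the verification in Sections~4--5. However, two concrete features of this particular system are misidentified, and each would derail the argument as written.

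First, the two intervals are handled by qualitatively different mechanisms. On $I_1=[1/2,1)$ the paper does \emph{not} pass to a first-return map on a scaled sub-domain: it proves a direct conjugacy $\tilde f_s=\phi_s^{-1}\circ\tilde f_{1-s}\circ\phi_s$ where $\phi_s$ is a piecewise \emph{translation} (an isometry swapping the upper and lower halves of $\tilde\Delta$), with parameter map $R(s)=1-s$. There is no scaling factor $\lambda$ and no induced map; your search for a similar sub-copy and a return time would not terminate. On $I_2$ the renormalization is a genuine first-return statement, but on \emph{both} sides: the paper shows $\tilde f_s|_{Z_s}$ is conjugate via a similarity of ratio $1/(1-2s)$ to $\tilde f_t|_{Y_t}$, where $Y_t$ is a fixed six-piece subset of $\tilde\Delta$ (two triangles, a semi-regular hexagon, and their reflections). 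The renormalized object is thus not the full PET $T_{\psi_2(s)}$ but its first return to $Y_{\psi_2(s)}$; this is why the paper calls the result ``partial renormalization'' and states the stronger version only as a conjecture.

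Second, your description of the dynamics of $\psi_2$ is inverted. On $I_2$ one has $R(s)=s/(1-2s)-2$, with derivative $1/(1-2s)^2$; at the fixed point $\sqrt2-1$ this equals $(3-2\sqrt2)^{-2}=17+12\sqrt2\approx 34$, so $R$ is strongly \emph{expanding} there. Consequently $R$ does not carry $I_2$ into itself (for instance $R(29/70)=5/12\notin I_2$), and $R^n(s)$ moves away from $\sqrt2-1$ rather than toward it. The interval $I_2=[53/128,29/70]$ is not a trapping region for $R$ but simply the parameter range over which the specific finite list of maximal polyhedra used in the verification remains combinatorially stable (the paper further subdivides it as $A_{2,3}\cup A_{2,4}$ to keep the polyhedron count finite in each piece). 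The self-similarity of the periodic tiling at $s=\sqrt2-1$ (Corollary~2.5) follows from $R(\sqrt2-1)=\sqrt2-1$ together with the conjugacy, not from any contraction argument.
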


\subsection{Background}
The interval exchange transformations (IETs) are the examples of piecewise isometries in dimension 1, see ~\cite{K}, ~\cite{Y} for surveys. The paper ~\cite{H} introduces rectangle exchanges which are the products of IETs. Another important class of piecewise isometries is piecewise rotation on polygons, which is studied in papers such as ~\cite{L}, ~\cite{LKV}. We know that piecewise rotations are closely related to the study of PETs in the following sense: Let $X \subset \R^2$ be a polygon together with a finite partition $\mathcal P$. If the piecewise rotation map $T$ performs a translation or rotation by a rational multiple of $\pi$ restricted on each element of $\mathcal P$, then there is a PET $S: Y \to Y$ conjugate to $T$ by a covering map $c: Y \to X$. 

The outer billiard maps on a convex polygon $P$ also give rise to piecewise isometries, see ~\cite{S4} for reference. The square of the outer billiards map is a piecewise translation outside $P$. In the paper ~\cite{S2}, ~\cite{S3}, a higher dimensional PET is constructed from the compactification of the outer billiard outside a kite. 

For work concerning renormalization of piecewise isometries, the Rauzy induction ~\cite{R} introduces a renormalization theory for IETs. In the paper ~\cite{L}, a general theory of renormalization of piecewise rotations is developed. The paper ~\cite{S2} shows that the renormalization scheme exists for PETs arising from the outer billiards on Penrose kites. 

The tetrahedral twists are very similar to the PETs described in ~\cite{C1}, ~\cite{C2} by Hooper. In ~\cite{C1}, the map is defined on four copies of torus, which we denote by $X$. For every point $x \in X$, the map performs a translation in either horizontal direction parametrized by $\alpha \in [0,\frac{1}{2})$ or in the vertical direction parametrized by $\beta \in [0,\frac{1}{2})$, respectively. This is the first example of PETs in 2-dimensional parameter space which is invariant under renormalization. Hooper describes the renormalization procedure in terms of the renormalization of the Truchet tilings, see ~\cite{C3}. 

\section{Definition of the Tetrahedral twists}
Let $\gamma_1, \gamma_2, \gamma_3$ be reflections about the points $a_1=(-1,0)$, $a_2=(-\frac{1}{2},\frac{\sqrt 3}{2})$ and $a_3=(0,0)$, respectively. Let $G$ be the group generated by $\gamma_1, \gamma_2, \gamma_3$. Define the space $\Delta = \R^2 / G$.
A fundamental domain for the action of $G$ by reflection is the union of four equilateral triangles $A_0, A_1,A_2,A_3$ of side length 1 where $A_0$ has the vertices $a_1,a_2,a_3$ and $A_n$ is the reflection of $A_0$ by the line connecting the points $a_n$ and $a_{n+1 \text{ (mod 3)}}$ for $n=1,2,3$. In fact, these four triangles are the faces of a regular tetrahedron. 
\begin{figure}[h]
\begin{center}
\includegraphics[width=1.4in]{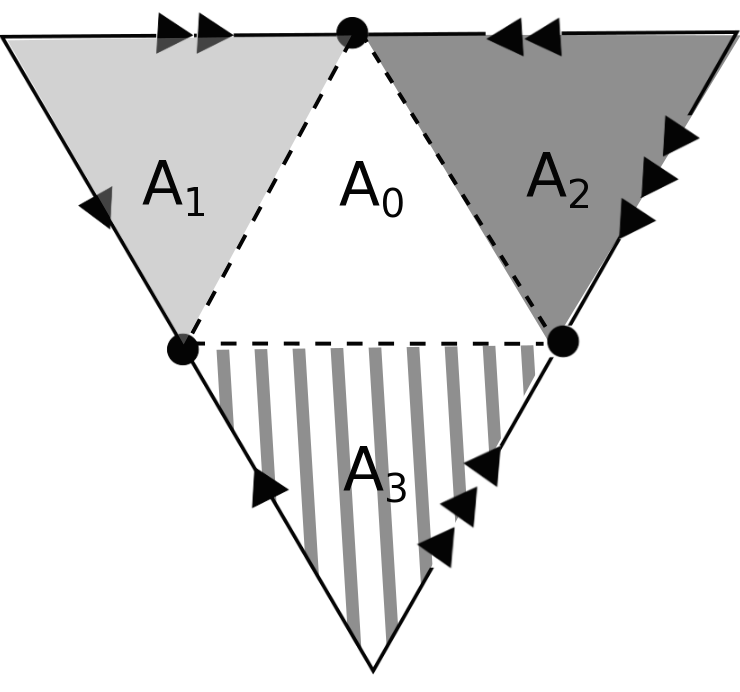}
\caption{A fundamental domain $\Delta$ for the action of $G$.}
\end{center}
\end{figure}

Fix three parameters $s_0,s_1,s_2 \in [0,1)$. Let $\omega_0=(1,0)$, $\omega_1 = (-\frac{1}{2}, \frac{\sqrt 3}{2})$, $\omega_2=(-\frac{1}{2}, -\frac{\sqrt 3}{2})$ be the unit vectors in the directions of cube roots of unity. For each $i=0,1$ or $2$, we define the map $f_{i}: \Delta \rightarrow \Delta$ as follows: 
\[
f_{i}(x,y) = (x, y) +  \sigma_is_i \cdot  \omega_i \mod G
\]
where
\[
\sigma_i = \left\{
\begin{array}{l l}
-1  & \quad \mbox{if $(x,y) \in A_{3-i}$}\\
1 & \quad \mbox{otherwise.}
\end{array} \right.
\]
\begin{figure}[h]
\centering
\includegraphics[scale=0.7]{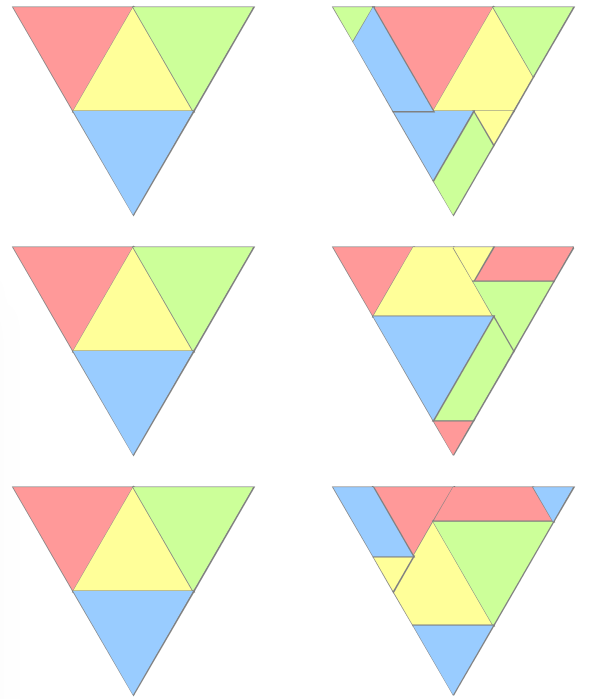}
\caption{Examples of the map $f_{s_0},f_{s_1}$ and $f_{s_2}$ from top to bottom.}
\end{figure}
The maps $f_0, f_1, f_2$ are illustrated in figure $2$. Now, suppose $s=s_i$ for $i=0,1,2$. A family of tetrahedral twists $f_s: X \rightarrow X$ is defined as
\[
 f_s(x,y)= f_{2} \circ f_{1}\circ f_{0}(x,y).
\]

\subsection{Connection to PETs} 
Let $\Lambda$ be the lattice generated by two vectors $(2,0)$ and $(1,-\sqrt 3)$ and $\tilde \Delta$ be the torus $\R^2/\Lambda$. Let $\pi: \tilde \Delta \to \Delta$ be the projection given by
\[
\pi(x,y)=\left\{
\begin{array}{ll}
(x,y) \quad &\mbox{if $y\geq \sqrt 3 x$}\\
(-x,-y) \quad &\mbox{otherwise},
\end{array}
\right.
\]
and $(\tilde \Delta, \pi)$ is a double cover of $\Delta$. Let $\iota: \tilde \Delta \to \tilde \Delta$ be the reflection about the origin. Define $\tilde A_\alpha=\pi^{-1}(A_\alpha)$ for $\alpha=0,\cdots, 3$, so 
$\tilde A_\alpha=A_\alpha \cup \iota(A_\alpha)$ for each $\alpha$. For fixed parameters $s_0,s_1, s_2 \in [0,1)$, the lifting $\tilde f_{i}$ of the map $f_{i}$ is given by the following equation:
\[
 \tilde f_{i}(x,y)= (x,y)+\sigma_i s_i \cdot \omega_i \mod \Lambda,
\] 
where 
\[
\sigma_i=\left\{
\begin{array}{l l}
1 & \quad \mbox{if $(x,y) \in \displaystyle\bigcup_{\alpha \neq 3-i} A_{\alpha} \cup \iota(A_{3-i})$}\\
-1 & \quad \mbox{otherwise}.
\end{array}
\right.
\]
\begin{figure}[h]
\centering
\includegraphics[scale=0.7]{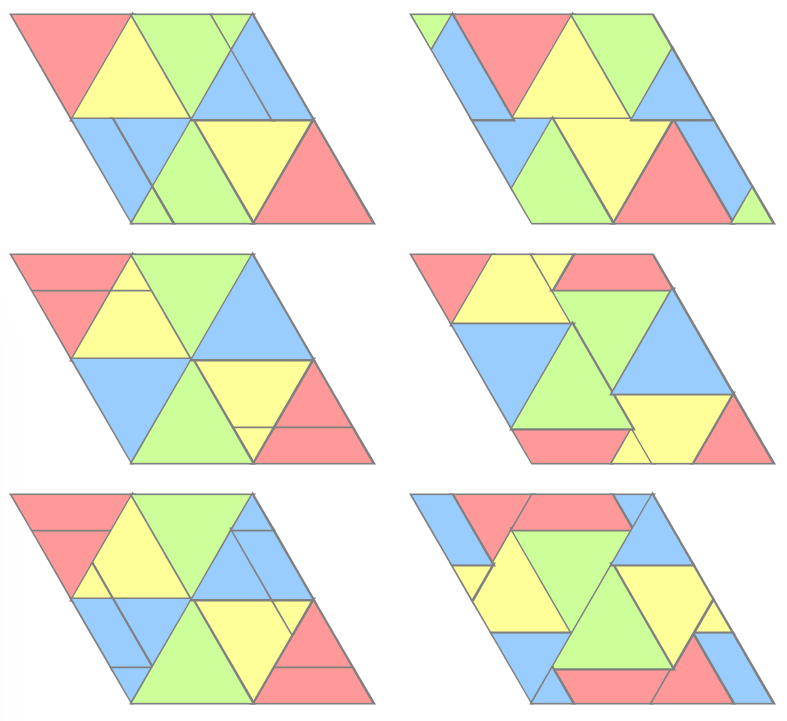}
\caption{This illustrates the PETs $\tilde f_{s_0}$, $\tilde f_{s_1}$ and $f_{s_2}$} on $\tilde \Delta$.
\end{figure}

For each $i\in \{0,1,2\}$,  $\tilde \Delta$ is divided into halves by a line $l_i$  through the origin in direction $\omega_i$.  On one half of $\tilde \Delta$, the map $\tilde f_{i}$ translates every point $(x,y)$  by amount $s_i$ (mod $\tilde \Lambda$) in direction $\omega_i$. In the other half, every point is translated by the same amount but in the opposite direction $-\omega_i$.  Therefore, $\tilde f_{i}$ is a PET, for each $i=0,1,2$. 

As mentioned in previous section, we set $s=s_i$ for all $i=0,1,2$. The composition $\tilde f_s: \tilde \Delta \to \tilde \Delta$ is defined as
\[
\tilde f_s(x,y) = \tilde f_2 \circ \tilde f_1 \circ \tilde f_0(x,y).
\]
For $s\in (0,1)$, the map $\tilde f_s$ is a PET. We call $\tilde f_s$ a \textit{tetrahedral PET}. More precisely, for every point $(x,y) \in \tilde \Delta$, there is some translation vector $V$ such that
\[
\tilde f_s(x,y)=(x,y) + V. 
\]
where $V$ is in the form of 
\[
(A+B\cdot \frac{s}{2}, C+D\cdot \frac{\sqrt 3}{2}s)
\]
for some $A,B,C,D \in \Z$.  

Fix $s \in (0,1)$. The partition $\mathcal D$ of $\tilde \Delta$ associated to the tetrahedral PET $\tilde f_s$ is obtained by the following fact:  Suppose that $g: X \to Y$ and $h: Y \to Z$ are  PETs and $\mathcal P=\{P_i\}_{i=1}^n$, $\mathcal Q=\{Q_j\}_{j=1}^m$ are the partitions of $X,Y$ determined by the maps $g$ and $h$, respectively. Then, $\{P_{ij}\}_{i=1, j=1}^{n,m}$ is a finer partition of $X$ determined by the PET $h \circ g: X \to Z$ where
\[
P_{ij}= P_i \cap g^{-1} (Q_j).
\]
The following figures show an example of  the tetrahedral PET $\tilde f_s$ when $s=5/13$. The figure on the left shows  the partition $\mathcal D$ determined by a tetrahedral PET $\tilde f_s$. The figure on the right shows the image of every element in $\mathcal D$ under $\tilde f_s$. To be clear, we assign a number to each element in the partition $\mathcal D$ in the left figure whose image is the shape with the same number in the right figure. 
\begin{figure}[h]
\centering
\includegraphics[scale=0.65]{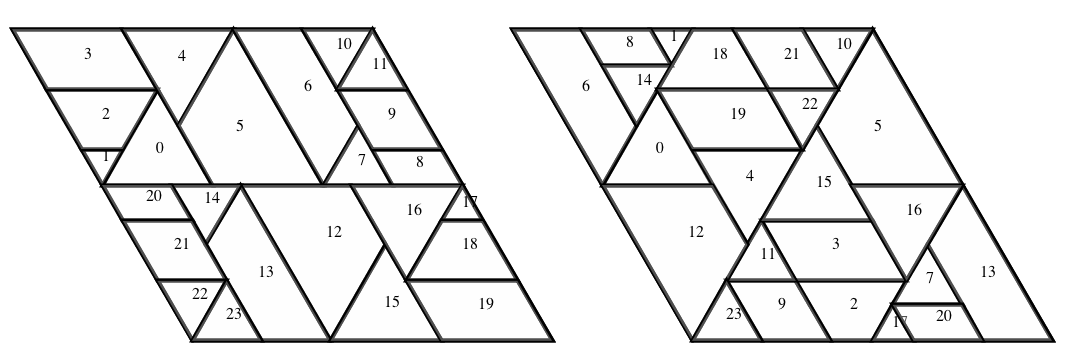}
\caption{An example of a tetrahedral PET $\tilde f_s$ for $s=5/13$}
\end{figure}

\subsection{Periodic Tilings}
Let $p\in \tilde \Delta \cap (\displaystyle\bigcup_{D\in \mathcal D} \partial D)^c$ be a periodic point with period $n$ of the map $\tilde f_s$. 
\begin{definition}
A periodic tile $\Diamond_p$ of $\tilde f_s$ is a maximal subset containing $p$ such that $\tilde f_s$ is entirely defined on $\Diamond_p$ and all points in $\Diamond_p$ have the same period as $p$.
\end{definition}
 For a given point $p \in \tilde \Delta$,  we provide a pseudo-code algorithm to produce a periodic tile $\Diamond_p$ containing $p$ of $\tilde f_s$.
\begin{enumerate}
\item Let $P_0$ be a polygon in the partition $\mathcal D$ such that $p\in \Int(P_0)$.
\item If $k<n$, then let $P_{k+1}=f(P_{k}) \cap D_{k+1}$ where $D_{k+1}$ is some element in the partition $\mathcal D$ and $f^{k+1}(p) \in D_{k+1}$. Set $k=k+1$
\item Else, return $P_{k}$. 
\end{enumerate}

By construction, every periodic tile $\Diamond_p$ is convex since it is the intersection of convex polygons. A periodic tiling $X_s$ is the union of all periodic tiles $\Diamond_p$ for all $p \in \tilde \Delta$.
\begin{figure}[h]
\centering
\includegraphics[width=2.8in]{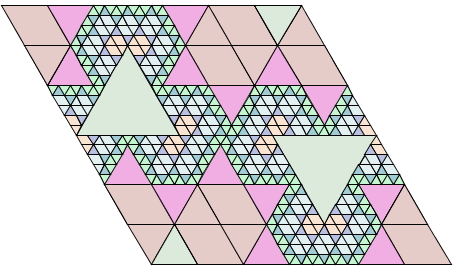}
\caption{Periodic tiling $X_s$  for $s=4/13$}
\end{figure}

\begin{figure}[h]
\centering
\includegraphics[width=2.8in]{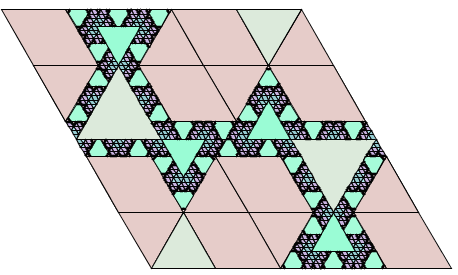}
\caption{Periodic tiling $X_s$  for $s=68/157$}
\end{figure}

\subsection{The Main Result: Partial Renormalization}
Define the renormalization map $R:[0,1)\to [0,1)$ by the formula.
\[
R(s)=\left\{
\begin{array}{l l}
\frac{s}{1-2s} - \lfloor \frac{s}{1-2s}\rfloor \quad & \mbox{if $[0,\frac{1}{2})$}\\
1-s & \mbox{if $x \in [\frac{1}{2},1)$}.
\end{array}
\right.
\]
For any subset $S\subset \tilde \Delta$, we write $\tilde f_s|_S$ as the first return map of $\tilde f_s$ on $S$. 

Let $s \in (0,1)$ and $\varsigma_s: \tilde \Delta \to \tilde \Delta$ be the reflection about the line $x= -\frac{t}{2}$. Define $H_s \subset \tilde \Delta$ as the semi-regular hexagon with vertices:
\[
V_0=(0,0),\quad  V_1=(1-s)(\frac{1}{2},\frac{\sqrt 3}{2}), \quad  V_2=(\frac{1}{2}-s,\frac{\sqrt 3}{2})
\]
\[
\varsigma_s(V_0), \quad  \varsigma_s(V_1), \quad \varsigma_s(V_2).
\]
Figure 6 shows an example of $H_s$ for $s=5/12$.  Define the subsets $Y'_s , Y_s$ of $\tilde \Delta$ as follows:
\[
Y_s'=A_1 \cup A_3 \cup H_s, \quad Y_s=Y_s' \cup \iota(Y_s').
\]

\begin{theorem}[Partial Renormalization]
Suppose $s \in [53/128,29/70]$ and $t=R(s)$. There exists a set $Z_s \subset \tilde \Delta$ such that 
\[
\tilde f_s|_{Z_s} = \phi_s^{-1} \circ \tilde f_t|_{Y_t} \circ \phi_s
\]
where $\phi_s: \tilde \Delta \to \tilde \Delta$ is a similarity with the scale factor $c=\frac{1}{1-2s}$. 
\end{theorem}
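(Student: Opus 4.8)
\emph{Setup and reduction.} The plan is to exhibit all the objects explicitly and then reduce the claimed identity to a finite list of conditions that are affine in $s$. First I would write $\phi_s$ as the composition of the homothety $z\mapsto cz$, $c=1/(1-2s)$, with an explicit rigid motion of the plane, chosen so that $\phi_s^{-1}$ carries $Y_t$ (where $Y_t=Y_t'\cup\iota(Y_t')$, $Y_t'=A_1\cup A_3\cup H_t$) onto a concrete polygonal region $Z_s\subset\tilde\Delta$. Since $Y_t$ is a fixed combinatorial shape whose vertices depend affinely on $t=R(s)$, matching these vertices with the intended vertices of $Z_s$ turns the construction of $(\phi_s,Z_s)$ into a routine coordinate computation, and it also shows $Z_s\subset\tilde\Delta$ (and, with a little more care, $Z_s\subset Y_s$). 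With $(\phi_s,Z_s)$ fixed, the theorem becomes the purely dynamical assertion that $\tilde f_s|_{Z_s}$ and $\phi_s^{-1}\circ(\tilde f_t|_{Y_t})\circ\phi_s$ coincide as PETs: same partition of $Z_s$ into convex cells, same translation on each cell. A preliminary step is to check that $\tilde f_t|_{Y_t}$ is genuinely a PET, i.e. that the first return to $Y_t$ is defined off a finite union of segments and preserves area; this follows once the finite partition of $Y_t$ below is produced, using area-invariance of $\tilde f_t$ and recurrence.

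\emph{The conjugacy.} I would split this into two stages. Stage one is a direct analysis of $\tilde f_t|_{Y_t}$: partition $Y_t$ into finitely many convex cells $Y_t^{(k)}$ according to the itinerary $\bigl(\tilde f_t^{0}(p),\dots,\tilde f_t^{n_k}(p)\bigr)$ a point $p$ follows before re-entering $Y_t$, using the intersection rule $P_{ij}=P_i\cap g^{-1}(Q_j)$ for the partition of a composition to build these cells and the return-time function, so that $\tilde f_t|_{Y_t}$ translates $Y_t^{(k)}$ by a vector $w_k=w_k(t)$ affine in $t$. Stage two transports this picture by $\phi_s^{-1}$ and shows that $\{\phi_s^{-1}(Y_t^{(k)})\}$ is precisely the partition of $Z_s$ induced by the first return of $\tilde f_s$, with return vector on $\phi_s^{-1}(Y_t^{(k)})$ equal to $\phi_s^{-1}$ applied to $w_k$, i.e. the rotation part of $\phi_s^{-1}$ applied to $(1-2s)w_k$. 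Concretely one must follow, for $p\in\phi_s^{-1}(Y_t^{(k)})$, the $\tilde f_s$-orbit of $p$ until it returns to $Z_s$ and verify that the accumulated translation is the predicted one. I expect the efficient way to organize this is to interpose one or two intermediate return domains $\tilde\Delta\supset W\supset Z_s$ (a short ``tower''), so that each step is an elementary first return already understood and the long orbit back to $Z_s$ is assembled from a bounded number of explicitly described pieces; this is the analogue, in this two-dimensional setting, of composing Rauzy-induction steps.

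\emph{Uniformity over $I_2$, and the main obstacle.} The construction yields one fixed combinatorial template --- a list of cells, itineraries and translation vectors, all depending affinely on $s$ --- whose correctness is equivalent to finitely many strict inequalities: that the claimed cells are nonempty and tile $Z_s$ (resp. $Y_t$), that each relevant orbit segment passes through the interiors of the predicted partition elements of $\tilde f_s$ (resp. $\tilde f_t$), and that no such segment returns to $Z_s$ (resp. $Y_t$) before the claimed time. Each inequality is linear in $s$, so it suffices to verify them at finitely many rationals; the endpoints $s=53/128$ and $s=29/70$ should be exactly the parameters at which one of these inequalities degenerates to an equality (a vertex meeting a partition line), which explains why the scheme holds on precisely this closed interval --- a neighborhood of the repelling fixed point $\sqrt2-1$ of $R$. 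The real difficulty is Stage two: because $\mathrm{area}(Z_s)=\mathrm{area}(Y_t)/c^{2}$ with $c^{2}\sim 34$ on $I_2$, the return time of $\tilde f_s$ to $Z_s$ is large, so the heart of the argument is designing the intermediate domains and the bookkeeping carefully enough that this long orbit is tracked \emph{exactly} and \emph{uniformly in} $s$ --- proving the itinerary is constant on each cell for every $s\in I_2$, not merely at $s=\sqrt2-1$.
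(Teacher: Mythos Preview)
Your strategy is sound at the conceptual level --- reduce the conjugacy to finitely many affine-in-$s$ conditions and verify them --- and this is exactly what the paper does. But the organization differs in two substantive ways, and one of your assumptions is not quite right.

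\textbf{Fiber bundle versus slice-by-slice.} You propose to work in $\tilde\Delta$ for each $s$ and then argue that the linear inequalities persist over $I_2$. The paper instead promotes $s$ to a third coordinate, working in the solid $\mathcal X=\{(x,y,s):(x,y)\in\tilde\Delta,\ s\in I\}$ with the bundle map $F(x,y,s)=(\tilde f_s(x,y),s)$. The maximal domains of the first return $F|_{\mathcal Y}$ and $F|_{\mathcal Z}$ are then \emph{convex polyhedra in $\R^3$} with explicit rational vertices, and the similarity $\phi_s$ glues into a single piecewise-affine $\phi:\mathcal X\to\mathcal X$. The conjugacy is verified by checking, for each maximal polyhedron $P_i$ on the $Y$-side, that $\phi^{-1}(P_i)$ sits inside a maximal polyhedron $Q_j$ on the $Z$-side, that $\phi^{-1}\circ F|_S(P_i)\subset F|_{\phi^{-1}(S)}(Q_j)$, that the images have pairwise disjoint interiors, and that the total volumes match. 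This replaces your ``check linear inequalities at finitely many rationals'' by vertex computations on a fixed finite list of $3$-polytopes; it is the same idea but packaged so that a computer can certify it directly.

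\textbf{No intermediate towers.} Your Stage two anticipates a Rauzy-style tower of intermediate return sets to tame the long return time. The paper does not do this: it computes the first-return partitions of $\mathcal Y$ and of $\mathcal Z$ outright (176 and 162 polyhedra respectively on $A_{2,3}$) and compares them as above. The long return time manifests only as a large but finite list of polyhedra, not as a nested induction.

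\textbf{The combinatorics is not constant on $I_2$.} Your ``one fixed combinatorial template'' over all of $[53/128,29/70]$ does not hold. The paper must split $I_2$ into $A_{2,3}=[41/99,29/70]$ and the adjacent piece down to $53/128$: as the parameter crosses $t_{2,3}=7/17$ (equivalently $s=41/99$) some maximal polyhedra terminate and sixteen ``newly-appeared'' polyhedra enter on each side. The paper handles this by classifying polyhedra as permanent, resident, primary, or newly-appeared, proving the conjugacy first on $A_{2,3}$ and then separately treating the newly-appeared pieces to extend to $A_{2,4}$. Your scheme would discover the same bifurcation as a degeneration of one of your linear inequalities in the interior of $I_2$, not merely at the endpoints; you should anticipate having to subdivide.

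In short: your outline would work, but the paper's proof is a computer-assisted polyhedral verification in the $3$-dimensional fiber bundle rather than an itinerary-tracking argument with towers, and it requires a subdivision of $I_2$ that your proposal does not yet account for.
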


Let $\mathcal U$ be the upper half plane in of $\R^2$ and $\mathcal L$ be the lower half. Define
\[
\tilde \Delta_{\mathcal U} = \tilde \Delta \cap \mathcal U, \quad \tilde \Delta_{\mathcal L} = \tilde \Delta \cap \mathcal L.
\]
\begin{theorem}[Partial Renormalization]
If $s \in [1/2,1)$, then $\tilde f_s$ is conjugate to $\tilde f_{1-s}$ by a piecewise translation map $\phi_s: \tilde \Delta \to \tilde \Delta$ where
\[
\phi_s(x,y)=\left\{
\begin{array}{l l}
(x,y)+(\frac{1}{2},-\frac{\sqrt 3}{2}) \quad \mbox{if $y\geq 0$}\\
(x,y)+(-\frac{1}{2}, \frac{\sqrt 3}{2}) \quad \mbox{if $y<0$}.
\end{array}
\right.
\]
\end{theorem}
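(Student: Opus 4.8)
The plan is to verify directly the intertwining relation $\phi_s\circ\tilde f_s=\tilde f_{1-s}\circ\phi_s$; since $\phi_s$ is manifestly bijective, this gives the asserted conjugacy. The first thing I would record is that $\phi_s$ does not actually depend on $s$, and that as a self-map of the torus $\tilde\Delta=\R^2/\Lambda$ it is simply the translation $\tau_v$ by $v=(\tfrac12,-\tfrac{\sqrt3}{2})=-\omega_1$: the two translation vectors in the piecewise definition differ by the lattice vector $(1,-\sqrt3)\in\Lambda$, so the branches agree on $\tilde\Delta$, and since $-2\omega_1=(1,-\sqrt3)\in\Lambda$ the map $\phi_s$ is even an involution. (Throughout $\tau_w$ denotes translation by $w$.) This reduces the theorem to analyzing how conjugation by $\tau_{-\omega_1}$ acts on the composition $\tilde f_s=\tilde f_2\circ\tilde f_1\circ\tilde f_0$ at parameter $s$.

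The heart of the argument should be two bookkeeping lemmas about the partitions. For each $j$, the partition of $\tilde f_j$ has exactly two cells, the slabs $H_j^{+}=\{\sigma_j=+1\}$ and $H_j^{-}=\{\sigma_j=-1\}$ between the two parallel closed geodesics of $\tilde\Delta$ in direction $\omega_j$, and $\sigma_j$ depends only on the coordinate transverse to $\omega_j$, with period $\sqrt3$ in it and a sign change each half period. (One checks this for $j=0$ directly from the description of $\tilde\Delta_{\mathcal U},\tilde\Delta_{\mathcal L}$ and gets $j=1,2$ from the invariance of $\Lambda$ and of the whole configuration under rotation by $2\pi/3$.) Lemma A: the map obtained from $\tilde f_j^{(s)}$ by interchanging the translation values on $H_j^{+}$ and $H_j^{-}$ equals $\tau_{\omega_j}\circ\tilde f_j^{(1-s)}$, since on $H_j^{+}$ the two maps differ by $-s\omega_j-(1-s)\omega_j=-\omega_j$ and on $H_j^{-}$ by $+\omega_j$, and $-\omega_j\equiv\omega_j\pmod\Lambda$ because $2\omega_j\in\Lambda$; this is the mechanism that turns $s$ into $1-s$. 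Lemma B: conjugating $\tilde f_j^{(s)}$ by $\tau_w$ leaves it unchanged when $w\parallel\omega_j$, and produces the slab-swapped map of Lemma A when the transverse-to-$\omega_j$ component of $w$ is $\equiv\tfrac{\sqrt3}{2}\pmod{\sqrt3}$ — and for every $i\neq j$ the vector $\omega_i$ falls in the latter case (its transverse-to-$\omega_j$ component is $\pm\tfrac{\sqrt3}{2}$), while $\omega_j$ falls in the former.

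Granting these, the computation telescopes. Distributing $\tau_{-\omega_1}(\cdot)\tau_{\omega_1}$ over $\tilde f_2\circ\tilde f_1\circ\tilde f_0$, Lemma B swaps the slabs in the $j=0$ and $j=2$ factors and fixes the $j=1$ factor, so Lemma A yields $(\tau_{\omega_2}\tilde f_2^{(1-s)})\circ\tilde f_1^{(s)}\circ(\tau_{\omega_0}\tilde f_0^{(1-s)})$. I would then slide the two surplus translations inward: by Lemma B (with $j=1$) and Lemma A one has $\tilde f_1^{(s)}\circ\tau_{\omega_0}=\tau_{\omega_0}\circ\tau_{\omega_1}\circ\tilde f_1^{(1-s)}$; next $\tau_{\omega_0}\circ\tau_{\omega_1}=\tau_{\omega_0+\omega_1}=\tau_{-\omega_2}$ because $\omega_0+\omega_1+\omega_2=0$; and $\tilde f_2^{(1-s)}\circ\tau_{-\omega_2}=\tau_{-\omega_2}\circ\tilde f_2^{(1-s)}$ since $-\omega_2\parallel\omega_2$. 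The leftover translations cancel, $\tau_{\omega_2}\circ\tau_{-\omega_2}=\mathrm{id}$, and what remains is exactly $\tilde f_2^{(1-s)}\circ\tilde f_1^{(1-s)}\circ\tilde f_0^{(1-s)}=\tilde f_{1-s}$.

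The main obstacle I anticipate is the bookkeeping underlying Lemmas A and B: fixing which geometric slab is $H_j^{+}$, confirming that $\sigma_j$ has transverse period exactly $\sqrt3$ (not a proper divisor), and checking that translation by $\omega_i$ for $i\neq j$ genuinely interchanges $H_j^{+}$ and $H_j^{-}$ on the torus rather than preserving them. Each of these is a finite, elementary verification from the explicit list of the eight triangles $A_\alpha,\iota(A_\alpha)$ and the $2\pi/3$-symmetry of $\Lambda$, but it is exactly where sign and half-period errors would enter; one also needs the harmless remark that the points lying on the geodesics $l_j$ are outside the domains of every map involved, so throughout one is working with honest PETs.
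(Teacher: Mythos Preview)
Your argument is correct, and it is genuinely different from the paper's. The paper proves this theorem in Section~5.3 by the same fiber-bundle/computer-assisted method it uses for the interval $[53/128,29/70]$: it lists the $22$ maximal polyhedra for $F$ on $\mathcal X([1/2,1))$, the $24$ on $\mathcal X([0,1/2])$, refines the latter to a $26$-piece partition $\mathcal Q'$, and then verifies for each piece the containments $\phi(Q'_k)\subset P_i$, $\phi\circ F(Q'_k)\subset F(P_i)$, pairwise disjointness of images, and equality of total volumes. Your route bypasses all of this: you recognize that on the torus $\phi_s$ is the single translation $\tau_{-\omega_1}$, reduce everything to the two facts $2\omega_j\in\Lambda$ and $\omega_0+\omega_1+\omega_2=0$, and then carry out a clean algebraic telescoping using Lemmas~A and~B. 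What your approach buys is a short, computer-free, conceptual proof that makes the $s\leftrightarrow 1-s$ symmetry transparent; what the paper's approach buys is uniformity --- the same polyhedral verification scheme handles both the easy interval $[1/2,1)$ and the hard interval $A_{2,3}$ without needing a separate idea. Your identification of the ``main obstacle'' is also on target: the only places a direct argument can go wrong are the half-period/sign checks in Lemma~B, and those are exactly the elementary verifications you flag (and which follow from the paper's explicit description of the slabs $H_j^\pm$ as the halves of $\tilde\Delta$ cut by the line $l_j$ through the origin in direction $\omega_j$).
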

The theorem above says that the periodic tiling of $\tilde f_{s}$ and $\tilde f_{1-s}$ are same up to the interchange of $\tilde \Delta_{\mathcal U}$ and $\tilde \Delta_{\mathcal L}$. 

\begin{conjecture}[Renormalization]
For any $s\in [0,\frac{1}{2})$, there exists a set $Z_s \subset \tilde \Delta$ such that $\tilde f_t|_{Y_t}$ is conjugate to
$\tilde f_s|_{Z_s}$ via a similarity $\phi_s: \tilde \Delta \to \tilde \Delta$ with the scale $\frac{1}{1-2s}$.
\end{conjecture}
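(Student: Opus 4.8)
The conjecture is the uniform version of the Partial Renormalization theorem for the interval $I_2=[53/128,29/70]$: it asks for the same similarity conjugacy $\tilde f_s|_{Z_s}=\phi_s^{-1}\circ\tilde f_t|_{Y_t}\circ\phi_s$, with $t=R(s)$ and scale $c=\tfrac{1}{1-2s}$, but now for every $s\in[0,1/2)$. The natural plan is to reuse the architecture of that proof. Since $\phi_s$ must carry $Z_s$ onto $Y_t$ and has scale $c>1$, the only candidate is $Z_s=\phi_s^{-1}(Y_t)$: a shrunken (and possibly reflected or translated) copy of the ``two faces plus hexagon'' region $A_1\cup A_3\cup H_t$ together with its $\iota$-image, sitting inside $\tilde\Delta$. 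What then has to be established is (i) that $Z_s$ is well placed inside the periodic tiling $X_s$, i.e.\ no periodic tile straddles $\partial Z_s$; (ii) that the forward $\tilde f_s$-orbit of every point of $Z_s$ returns to $Z_s$ in finitely many steps, so that the first-return map $\tilde f_s|_{Z_s}$ is defined; and (iii) that conjugating this return map by $\phi_s$ reproduces the first-return map $\tilde f_t|_{Y_t}$, cell by cell. Because $\tilde f_s$ is a PET whose translation vectors all have the controlled form $(A+Bs/2,\ C+D\sqrt3 s/2)$ with $A,B,C,D\in\Z$, step (iii) is essentially a bookkeeping of integer vectors once the cell-to-cell return combinatorics of (ii) are in hand, and the area- and tiling-compatibility needed to upgrade this to a genuine conjugacy follows, as in the $I_2$ case, from the convexity of periodic tiles and the area-preservation of $\tilde f_s$.

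Concretely, I would first stratify the parameter interval: write $[0,1/2)=\bigsqcup_{n\ge0}J_n$, where $J_n=\{\,s:\lfloor s/(1-2s)\rfloor=n\,\}=[\tfrac{n}{2n+1},\tfrac{n+1}{2n+3})$, so that on each $J_n$ the renormalization map is the single linear-fractional branch $s\mapsto s/(1-2s)-n$. Fixing $n$, I would (a) track the partition $\mathcal D$ of $\tilde f_s$ and the periodic tiling $X_s$ as $s$ varies over $J_n$, recording the finer subdivision of $J_n$ into intervals of constant combinatorial type; (b) locate $Z_s$ inside $X_s$ and bound the first-return time to $Z_s$ in terms of $n$; (c) write out the return map on each cell of $\mathcal D$ meeting $Z_s$ and check that $\phi_s$ sends it to the corresponding cell of $\tilde f_t|_{Y_t}$; and (d) verify the global matching of the two tilings inside $Z_s$ and $Y_t$. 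Irrational $s$ and the endpoints of the $J_n$ would then be handled either by continuing the combinatorics symbolically across the branch, or by approximating by rationals in $J_n$, for which (a)--(d) is a finite computation, and passing to the limit.

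The main obstacle is uniformity. For $I_2$ the parameter $\sqrt2-1$ is the fixed point of the branch $n=2$ of $R$, so the renormalization there is stationary: the combinatorial types of $\mathcal D$, $X_s$, $Y_s$ and $Z_s$ are essentially constant over the whole neighbourhood $I_2$, and a single bounded calculation closes the loop; on $I_1=[1/2,1)$ the situation is even softer, with $R$ acting there simply as $s\mapsto1-s$, for which the $I_1$ theorem supplies an explicit conjugacy by a piecewise translation. For a general $s\in[0,1/2)$ neither simplification is available: the number of cells of $\mathcal D$ is not bounded uniformly in $s$ (it blows up as $s$ approaches the branch endpoints $\tfrac13,\tfrac25,\tfrac37,\dots$ and the accumulation point $\tfrac12$), and iterating the renormalization can drive the parameter into arbitrarily high branches, so there is no a priori bound on the complexity of the objects one must control. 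Closing the argument therefore seems to require isolating a \emph{finite} family of prototile shapes closed under the first-return construction --- a renormalization ``alphabet'' in the spirit of Hooper's renormalization of Truchet tilings~\cite{C3} --- and proving that steps (a)--(d) reproduce exactly that alphabet, rescaled by $\phi_s$. A concrete route would be to set up an explicit dictionary between the tetrahedral PET family and (a finite cover of) Hooper's two-parameter family~\cite{C1} and transport his renormalization; matching the partitions and the parameter correspondence precisely enough for the conjugacy to descend is the delicate point, and its present absence is what keeps the statement a conjecture rather than a theorem.
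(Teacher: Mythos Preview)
The statement you are asked to prove is labelled a \emph{Conjecture} in the paper, and the paper does not prove it; it only establishes the partial cases $s\in[1/2,1)$ and $s\in[53/128,29/70]$. You clearly recognize this yourself in your final paragraph, so what you have written is not a proof but a research outline, and it should be read as such.

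As an outline it is well aligned with the paper's methodology. Your identification $Z_s=\phi_s^{-1}(Y_t)$ is exactly the paper's definition at the start of Section~5, and your plan of stratifying the parameter interval, tracking maximal domains in the fiber-bundle picture, and verifying the conjugacy cell-by-cell with volume checks is precisely the architecture used for $A_{2,3}$ and $A_{2,4}$. Your diagnosis of the obstacle---unbounded combinatorial complexity as $s$ approaches branch endpoints, and the need for a finite renormalization ``alphabet''---is also the right one. One refinement worth noting: the paper's experimental Calculation~4.3 indicates that the stratification needed is finer than your branch intervals $J_n$; already on $[1/3,1/2)$ one apparently needs a subdivision indexed by the first four partial quotients (the families $A_{m,n},\dots,E_{m,n}$), not just by $n=\lfloor s/(1-2s)\rfloor$. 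So step~(a) of your program would have to be organized around that finer continued-fraction combinatorics from the outset, and the inductive structure would more naturally be phrased in terms of the splitted expansion of Section~3 than in terms of the single index $n$.
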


\begin{definition}
A space $X$ has a mostly self-similar structure if there is a  disjoint union $\displaystyle\bigsqcup_i^n X_i \subset X$ such that each $X_i$ is self-similar.
\end{definition}

\begin{corollary}
Let $s=\sqrt 2-1 \in [53/128, 29/70]$ that is a fixed point under the renormalization map $R$. The periodic tiling $X_s$ is mostly self-similar. 
\end{corollary}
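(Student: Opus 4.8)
The plan is to feed the fixed point $s=\sqrt2-1$ of $R$ into the Partial Renormalization theorem that applies on $[53/128,29/70]$, and then transfer the self-similarity it produces from the first-return map to the periodic tiling $X_s$.

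First I would record the arithmetic. Since $\sqrt2-1\in[0,\tfrac12)$, $R(\sqrt2-1)$ is the fractional part of $\tfrac{\sqrt2-1}{1-2(\sqrt2-1)}$, and because $1-2(\sqrt2-1)=3-2\sqrt2=(\sqrt2-1)^2$ this quotient equals $\tfrac1{\sqrt2-1}=\sqrt2+1$, whose fractional part is $\sqrt2-1$; hence $R(s)=s$. A direct check gives $\tfrac{53}{128}<\sqrt2-1<\tfrac{29}{70}$, so the theorem applies with $t=R(s)=s$ and yields a set $Z_s\subset\tilde\Delta$ together with a similarity $\phi_s$ of scale factor $c=\tfrac1{1-2s}=\tfrac1{(\sqrt2-1)^2}=(\sqrt2+1)^2=3+2\sqrt2$ such that $\tilde f_s|_{Z_s}=\phi_s^{-1}\circ\tilde f_s|_{Y_s}\circ\phi_s$. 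Thus $\phi_s$ carries $Z_s$ onto $Y_s$ and conjugates the first return of $\tilde f_s$ to $Z_s$ with the first return of $\tilde f_s$ to $Y_s$, and since $c>1$ the region $Z_s$ is a contracted copy of $Y_s$.

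Next I would pass from return maps to tilings, using the elementary fact that for a finite union of convex polygons $W\subset\tilde\Delta$ the periodic tiling of $\tilde f_s|_W$ agrees with $X_s\cap W$ away from $\partial W$: on any periodic tile $\Diamond_p$ of $\tilde f_s$ the whole forward combinatorics is constant, so the return time to $W$ and the return map are constant on $\Diamond_p\cap W$, and conversely. Both $Y_s=A_1\cup A_3\cup H_s\cup\iota(A_1\cup A_3\cup H_s)$ and its similar image $Z_s$ are such $W$, and a conjugacy by a similarity sends periodic tiles to periodic tiles, so $\phi_s$ maps the periodic tiling of $\tilde f_s|_{Z_s}$ bijectively onto that of $\tilde f_s|_{Y_s}$; combining with the previous step, $\phi_s(X_s\cap Z_s)=X_s\cap Y_s$ up to a set of measure zero, and $X_s\cap Z_s$ is a subcollection of the tiles of $X_s\cap Y_s$ once one knows $Z_s\subseteq Y_s$. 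Consequently $\phi_s^{-1}$ is a contracting similarity of $Y_s$ that carries $X_s\cap Y_s$ onto the proper sub-tiling $X_s\cap Z_s$, and iterating gives $X_s\cap Y_s=\overline{\bigsqcup_{k\ge0}\phi_s^{-k}\bigl(X_s\cap(Y_s\setminus Z_s)\bigr)}$, exhibiting $X_s\cap Y_s$ as self-similar with inflation $\phi_s$. To assemble a mostly self-similar structure I would then take $X_s\cap Y_s$ as one self-similar piece and cover $\tilde\Delta\setminus Y_s$, up to measure zero, by images of parts of $Y_s$ under powers of the piecewise isometry $\tilde f_s$: a.e. point of $\tilde\Delta$ enters $Y_s$ by Poincar\'e recurrence, the first-entry time is constant on each periodic tile, and the $\iota$-symmetry of $\tilde f_s$ --- which follows from $\sigma_i(\iota(p))=-\sigma_i(p)$, whence $\tilde f_i\circ\iota=\iota\circ\tilde f_i$ and so $\tilde f_s\circ\iota=\iota\circ\tilde f_s$, so that $X_s$ and $Y_s$ are $\iota$-invariant --- keeps the number of congruence types of these pieces finite; each such piece is a finite union of isometric copies of sub-tilings of $X_s\cap Y_s$, hence self-similar, and collecting them yields the required disjoint family $\bigsqcup_i X_i\subset X_s$.

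The main obstacle is twofold. First, one must locate $Z_s$ well enough to know $Z_s\subseteq Y_s$: the theorem only asserts $Z_s\subset\tilde\Delta$, and the whole self-similarity of $X_s\cap Y_s$ rests on the renormalization ``closing up'' inside $Y_s$, so this containment, together with a grip on the fundamental annulus $Y_s\setminus Z_s$, has to be extracted from the construction in the theorem's proof or from the explicit data of $\phi_s$ and the hexagon $H_s$. Second, promoting ``self-similar on $Y_s$'' to a genuinely mostly self-similar finite decomposition requires the boundary bookkeeping to remain harmless at all renormalization levels and the first-entry structure over $Y_s$ to be as tame as claimed. I expect the first point to be the one piece of real work; granting it, the rest is routine.
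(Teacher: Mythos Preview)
The paper does not actually supply a separate proof of this corollary: after stating it in Section~2.3 it says ``the proofs will be provided in section~5,'' but Section~5 only carries out the computer-assisted verification of the two Partial Renormalization theorems and never returns to the corollary. So there is nothing to compare against beyond the implicit claim that the corollary is an evident consequence of Theorem~2.1 at the fixed point $s=\sqrt2-1$.

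Your sketch supplies exactly that missing derivation, and the outline is the right one. The arithmetic (that $R(\sqrt2-1)=\sqrt2-1$, that $c=(\sqrt2+1)^2=3+2\sqrt2$, and the interval check) is correct, and the passage from a self-conjugacy of first-return maps to self-similarity of the restricted tiling is the standard mechanism.

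On the two obstacles you flag: the first one, $Z_s\subseteq Y_s$, is not an open issue once you use the paper's explicit data. At the start of Section~5 the paper defines $Z_s=\phi_s^{-1}(Y_t)$ and writes $\phi_s$ piecewise; on each of the six pieces $A_1,\iota(A_1),A_3,\iota(A_3),H_t,\iota(H_t)$ the map $\phi_s$ is a dilation by $c>1$ about a specific point, and when $t=s$ that point is a vertex of the corresponding piece (e.g.\ $(-1,0)$ for $A_1$, the origin for $A_3$). Hence each branch of $\phi_s^{-1}$ carries its piece of $Y_s$ into itself, giving $Z_s\subset Y_s$ directly. The second obstacle --- promoting self-similarity on $Y_s$ to a finite ``mostly self-similar'' decomposition of all of $X_s$ --- is genuinely not spelled out anywhere in the paper; the paper's Definition~2.4 only asks for \emph{some} finite disjoint family of self-similar pieces inside $X_s$, so in principle the six pieces of $X_s\cap Y_s$ already suffice and your Poincar\'e-recurrence argument to cover the complement is more than the definition requires. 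In short, your proposal is correct and is more detailed than anything the paper itself offers for this corollary.
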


The proofs will be provided in section 5. 

\begin{figure}
\centering
\includegraphics[scale=0.45]{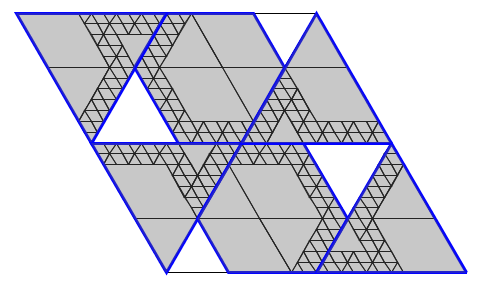}
\caption{ $Y_t$ lightly shaded for $t=5/12=R(29/70)$}
    \centering
    \subfloat{{\includegraphics[scale=0.4]{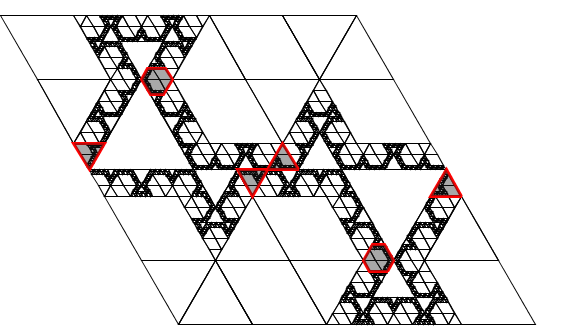} }}%
    
    \subfloat{{\includegraphics[scale=0.6]{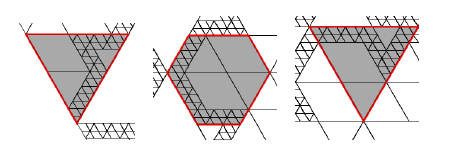}}}%
    \caption{$Z_s$ lightly shaded for $s=29/70$}
\vspace{2em}
\centering
\includegraphics[width=7cm]{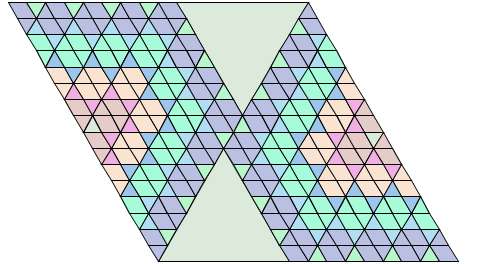}
\includegraphics[width=7.3cm]{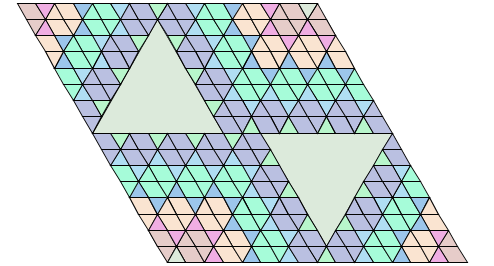}
\caption{The periodic tilings for $s=4/5$ on the left and $t=1/5=R(s)$ on the right}
\end{figure}

\begin{figure}
\centering
\includegraphics[width=3in]{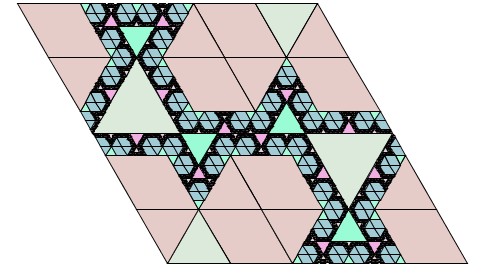}
\includegraphics[width=2.6in]{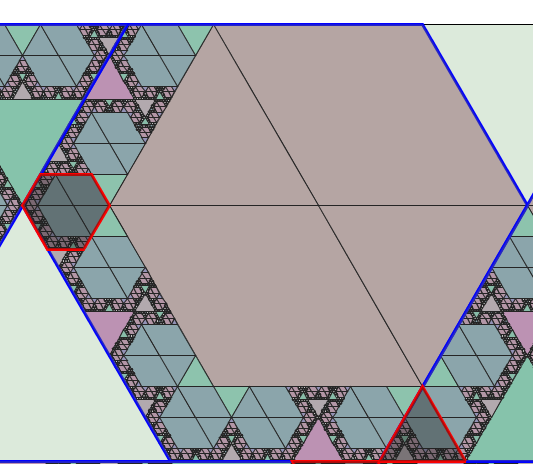}
\caption{Periodic tiling $X_s$ for $s=\sqrt 2-1$}
\end{figure}

\section{The Renormalization Map}
In this section, we explore the properties of the renormalization map $R$ and its connection to continued fraction expansions. Recall that $R:[0,1)\to [0,1)$ is given by the formula:
\[
R(s)=\left\{
\begin{array}{l l}
\frac{s}{1-2s} - \lfloor \frac{s}{1-2s}\rfloor \quad & \mbox{if $s\in [0,\frac{1}{2})$}\\
1-s \quad & \mbox{if $s \in [\frac{1}{2},1)$}.
\end{array}
\right.
\]
Each fixed point $s$ of $R$ in $(0,\frac{1}{2})$ is in the form of 
\[
s=\frac{-n+\sqrt{n(n+2)}}{2}, \quad \mbox{$n \geq 1, n\in \Z$}.
\]
Moreover, all fixed points $s$ have continued fractions expansion in the following form:
\[
(0; \overline{2,n}),  \quad \mbox{$n \geq 1, n\in \Z$}.
\]

\begin{lemma}
Let $\frac{p}{q} \in\Q$ in $[0,1)$. There exists some integer $k \geq 0$ such that $R^k(\frac{p}{q}) \in \{0,\frac{1}{2}\}$. 
\end{lemma}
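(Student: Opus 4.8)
The plan is to track what $R$ does to a rational number written in lowest terms and show that the ``complexity'' strictly decreases along the orbit until it hits $0$ or $\tfrac12$. Let $\frac{p}{q}\in[0,1)$ with $\gcd(p,q)=1$. If $\frac{p}{q}\in[\tfrac12,1)$ then $R(\tfrac{p}{q})=1-\tfrac{p}{q}=\tfrac{q-p}{q}\in[0,\tfrac12]$, so after at most one step we may assume the orbit sits in $[0,\tfrac12)$; note the denominator is unchanged by this branch. So the real content is the first branch. For $\frac{p}{q}\in(0,\tfrac12)$ we have $0<q-2p\le q$ and $\gcd(p,q-2p)=\gcd(p,q)=1$, so
\[
\frac{s}{1-2s}=\frac{p/q}{(q-2p)/q}=\frac{p}{q-2p}
\]
is already in lowest terms, and $R(\tfrac{p}{q})$ is its fractional part $\tfrac{p'}{q-2p}$ with $p'=p\bmod(q-2p)$ and $0\le p'<q-2p$. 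Thus the new denominator is $q-2p$, which satisfies $q-2p<q$ strictly whenever $p\ge 1$, i.e. whenever $\frac{p}{q}\neq 0$.

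First I would set up the induction on the denominator $q$. The base cases $q=1$ (so $\frac{p}{q}=0$) and $q=2$ (so $\frac{p}{q}=0$ or $\tfrac12$) are immediate. For the inductive step, take $\frac{p}{q}$ in lowest terms with $q\ge 3$. If $\frac{p}{q}\in[\tfrac12,1)$, apply $R$ once to land at $\tfrac{q-p}{q}\in[0,\tfrac12]$; if this equals $0$ or $\tfrac12$ we are done, otherwise it lies in $(0,\tfrac12)$ with denominator dividing $q$ — after reducing, its denominator is $\le q$, and one more application of the first branch gives a number with strictly smaller denominator, at which point the inductive hypothesis finishes the argument. If instead $\frac{p}{q}\in(0,\tfrac12)$ directly, the computation above shows $R(\tfrac{p}{q})$ has denominator $q-2p$ after full reduction (in fact $\tfrac{p}{q-2p}$ is already reduced, and taking the fractional part only shrinks the numerator), which is $<q$ since $p\ge1$; again invoke the inductive hypothesis. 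Since $\{0,\tfrac12\}$ is exactly the set of points where $R$ would otherwise fail to decrease the denominator (at $0$ the first branch fixes it; $\tfrac12$ is the boundary), the orbit must reach this set in finitely many steps.

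The one subtlety I want to be careful about is the edge behavior of the first branch when $q-2p$ is small or when $p'=0$: if $p' = 0$ then $R(\tfrac{p}{q})=0$ and we have already landed in the target set, so that case is even better than claimed; and if $q-2p=1$ then the fractional part is automatically $0$, same conclusion. I should also double-check that $\frac{p}{q}\in(0,\tfrac12)$ genuinely forces $p\ge 1$ and $q-2p\ge 1$, which it does since $0<p$ and $2p<q$ are strict. The main obstacle — such as it is — is purely bookkeeping: making sure the $[\tfrac12,1)$ branch is not treated as ``free'' (it does not by itself decrease $q$), so the decrease-of-denominator argument must be phrased in terms of the composition ``at most one reflection step followed by one first-branch step,'' which is where the strict inequality $q-2p<q$ does the real work. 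Everything else is elementary arithmetic with no analytic input.
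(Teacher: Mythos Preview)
Your proof is correct and follows essentially the same route as the paper's: both compute $R$ explicitly on each branch, observe that the reflection branch leaves the denominator unchanged while the branch on $(0,\tfrac12)$ sends $\tfrac{p}{q}$ to a reduced fraction with denominator $q-2p\le q-2$, and conclude that the denominator must eventually drop to $1$ or $2$. The paper packages this as the single inequality $q_{k+2}\le q_k-2$ for the square map $R^2$, whereas you unfold it into a cleaner induction on $q$ with the ``reflection then first-branch'' bookkeeping spelled out; your version is more careful but not genuinely different.
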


\begin{proof}
We have 
\[
R\big(\frac{p}{q}\big)=\left\{
\begin{array}{l l}
\frac{p}{q-2p} \mod \Z \quad & \mbox{if $\frac{p}{q} \in [0,\frac{1}{2})$}\\
\frac{q-p}{q} \quad & \mbox{if $\frac{p}{q} \in [\frac{1}{2},1)$}.
\end{array}
\right.
\]
Write $\frac{p_k}{q_k}=R^k(\frac{p}{q})$. When we apply the square map $R^2$, the denominators have the fact that $q_{k+2}\leq q_{k}-2$. Thus, the $q_k$ drops to a value of $1$ or $2$ for some $k \geq 0$. It means that $R(\frac{p_k}{q_k})$ must be $0$ or $\frac{1}{2}$ for some $k \geq 0$.
\end{proof}

For every $s\in [0,1)$, we can define a coding map $\mathcal M: [0,1) \to \Z \times \Z \times \{\pm 1\}$ as follows:
\[
\mathcal M(s)= \left\{
\begin{array}{l l}
(2,n,1) \quad & \mbox{if $s\in (0,\frac{1}{2})$ and $n \leq \frac{s}{1-2s} < n+1$} \\
(2,0,1) \quad & \mbox{if $s=\frac{1}{2}$}\\
(0,1,-1) \quad & \mbox{if $s \in (\frac{1}{2},1)$}\\
(0,\infty,1) \quad & \mbox{if $s=0$}
\end{array}
\right.
\]

\begin{definition}
Let $s \in (0,1)$. A coding sequence $\{(m_k,n_k,r_k) \in \Z \times \Z \times \{\pm 1\}\}_{k=0}$ for $s$ is a sequence of finite or infinite length such that every element $(m_k,n_k,r_k)$ in the sequence is given by the formula
\[
(m_k, n_k, r_k)=\mathcal M(R^k(s))
\]
for $k \geq 0$. If $R^k(s)=0$, then the sequence terminates at step at step $k-1$. If $R^k(s)=\frac{1}{2}$, then the sequence terminates at step $k$.
\end{definition}
By lemma 3.1, the coding sequence for a rational number terminates after a finite number of steps. For example, the coding sequence of $5/23$ is 
\[
\{(2,0,1), (2,1,1), (0,1,-1), (2,1,1)\}.
\]

Now, we define  $\Omega$ as the set of all sequence $\{(m_k, n_k,r_k)\}$ of finite or infinite length satisfying the following condition:
\begin{enumerate}
\item $(0,1,-1)$ cannot appear consecutively in the sequence
\item For a finite sequence $\{(m_k,n_k,r_k)\}_{k=1}^{l}$, the last element $(m_l, n_l,r_l) \neq (0,1,-1)$.
\end{enumerate}

\begin{theorem}
Let $\{(m_k,n_k,r_k)\}$ be a sequence in $\Omega$.
\begin{enumerate} 
\item If $\{(m_k,n_k,r_k)\}$ is infinite, there is a unique $s \in [0,1)$ determined via the formula 
\[
s=\dfrac{1}{m_0+\dfrac{1}{n_0+\dfrac{r_0}{m_1+\dfrac{1}{n_1+\dfrac{r_1}{m_2+\cdots}}}}}.
\]
Moreover, the coding sequence of $s$ is $\{(m_k,r_k,n_k)\}_{k=0}^\infty$. 
\item If the sequence $\{m_k,n_k,r_k\}_{k=0}^l$ is finite of length $l+1$ and $(m_l,n_l,r_l)\neq(2,0,1)$, then there is a unique $s \in [0,1)$ determined by 
\[
s=\dfrac{1}{m_0+\dfrac{1}{n_0+\cdots +\dfrac{r_l}{m_l+\dfrac{1}{n_l}}}}. \quad \tag{*}
\]
\item If $\{(m_k,n_k,r_k)\}^l_{k=0}$ is finite and $(m_l,n_l,r_l)=(2,0,1)$, then there is a unique $s \in (0,1)$ determined by the formula (*) but without $1/n_l$.
\end{enumerate} 
\end{theorem}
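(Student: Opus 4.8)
The plan is to exhibit the nested fraction as a composition of Möbius maps---one for each symbol---and to show that the resulting evaluation map $\Omega\to[0,1)$ is a two-sided inverse of the coding map $s\mapsto\{\mathcal M(R^k(s))\}$; this is the exact analogue of the fact that the regular continued fraction expansion inverts the Gauss map, with $R$ playing the role of the Gauss map. For a symbol $(m,n,r)$ put
\[
T_{(m,n,r)}(x)=\cfrac{1}{m+\cfrac{1}{\,n+rx\,}},
\]
with the evident conventions when $n=\infty$. The right-hand sides of the three displayed formulas are then $T_{(m_0,n_0,r_0)}\circ T_{(m_1,n_1,r_1)}\circ\cdots$ evaluated at a terminal value: at $0$ in the infinite case and in case (2), and in case (3) with the last symbol $(2,0,1)$ contributing only the endpoint $\tfrac12=\tfrac1{m_l}$ (the degenerate $s=0$, coded by $(0,\infty,1)$, being treated separately). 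Everything then reduces to a single one-step identity.

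\emph{Step 1: the one-step identity.} I would first prove that if $s\in(0,1)$, $(m_0,n_0,r_0)=\mathcal M(s)$ and $t=R(s)$, then $s=T_{(m_0,n_0,r_0)}(t)$; and conversely that for every $t\in[0,1)$ and every admissible leading symbol $(m,n,r)$ the point $s'=T_{(m,n,r)}(t)$ lies in $(0,1)$ and satisfies $\mathcal M(s')=(m,n,r)$ and $R(s')=t$. This is a direct case check. If $s\in(0,\tfrac12)$ and $n=\lfloor\tfrac{s}{1-2s}\rfloor$, then $\tfrac{s}{1-2s}=n+t$, so $s=\tfrac{n+t}{1+2(n+t)}=T_{(2,n,1)}(t)$, and as $n+t$ sweeps $(0,\infty)$ this remains in $(0,\tfrac12)$ with integer part $n$; if $s\in(\tfrac12,1)$, then $T_{(0,1,-1)}(t)=1-t=s$ and $1-t\in(\tfrac12,1)$ because $t\in(0,\tfrac12)$; and $s=\tfrac12$ is the boundary case absorbed by the ``drop $1/n_l$'' convention.

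\emph{Step 2: finite sequences (parts (2) and (3)).} I would induct on the length, peeling off $(m_0,n_0,r_0)$: by Step 1 it is enough to build $s$ for the shifted (shorter) sequence and then apply $T_{(m_0,n_0,r_0)}$. The content to verify is that the explicit formula $(\ast)$ has no vanishing denominator and that each of its truncations lies in $[0,1)$, so that Step 1 applies at every stage. This is exactly what the admissibility conditions provide: condition~1 forces the sub-value immediately below any $(0,1,-1)$ to have leading symbol of type $(2,n,1)$, hence to lie in $(0,\tfrac12)$, so the factor $1-x$ stays bounded away from $0$; condition~2 rules out the final symbol $(0,1,-1)$, for which $R^l(s)\in(\tfrac12,1)$ would keep the coding sequence alive. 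Uniqueness is immediate, $(\ast)$ being an explicit function of the symbols. That the coding sequence of this $s$ is the given one follows by iterating the converse half of Step~1, the process stopping at $0$ or $\tfrac12$ precisely as the last symbol prescribes.

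\emph{Step 3: infinite sequences (part (1)), and the main obstacle.} For existence I would prove the nested fraction converges. Let $D_N=T_{(m_0,n_0,r_0)}\circ\cdots\circ T_{(m_N,n_N,r_N)}([0,1])$. Each $T_{(2,n,1)}$ sends $[0,1]$ into $[0,\tfrac13]$ and each $T_{(0,1,-1)}$ sends $[0,1]$ onto $[0,1]$, so grouping the sequence---legitimately, by condition~1---into blocks that are a lone $(2,n,1)$ or a pair $(0,1,-1)(2,n,1)$, every block map carries $[0,1]$ into $[0,\tfrac13]$; hence the $D_N$ are nested nonempty compact intervals and $s:=\bigcap_N D_N$ is a well-defined point of $[0,1)$, unique among values compatible with all truncations, provided $|D_N|\to0$. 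The coding-sequence claim then follows as in Step~2, since $R^k(s)=\bigcap_N T_{(m_k,n_k,r_k)}\circ\cdots\circ T_{(m_N,n_N,r_N)}([0,1])$ by repeated use of Step~1, placing each $R^k(s)$ in the interval on which $\mathcal M$ returns $(m_k,n_k,r_k)$. The delicate point---and the main obstacle---is precisely $|D_N|\to0$: on $[0,1]$ one has $T_{(2,n,1)}'(x)=\tfrac{1}{(2n+2x+1)^2}\le\tfrac19$ as soon as $n\ge1$, which already forces contraction when $n\ge1$ occurs infinitely often, but $T_{(2,0,1)}$ has derivative $1$ at $x=0$, so for a tail built only from $n=0$ symbols one cannot use a Lipschitz constant and must instead estimate the interval lengths directly (they decay like $\tfrac1{2N+1}$). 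A secondary, bookkeeping, subtlety is confirming at each level that $R^k(s)$ lands on the correct side of $\tfrac12$ with the correct integer part, which again rests on condition~1.
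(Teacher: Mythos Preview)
Your approach is essentially the same as the paper's: the core is the one-step identity (your Step~1), which the paper computes in exactly the same way---writing $t=R(s)$ and solving to get $s=\tfrac{1}{2+\tfrac{1}{n_0+t}}$ in the $(2,n_0,1)$ case and $s=\tfrac{1}{0+\tfrac{1}{1-t}}$ in the $(0,1,-1)$ case---and then iterates. Where you differ is in rigor rather than strategy: the paper simply says ``repeat this argument by substituting $s=R(s)$'' and does not discuss convergence of the infinite nested fraction, well-definedness of the finite one, or the role of the admissibility conditions in $\Omega$, whereas you supply all of this (the nested-interval argument, the contraction estimates, the handling of the degenerate $n=0$ tail, and the check that each $R^k(s)$ lands on the correct side of $\tfrac12$). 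So your proof is a fleshed-out version of the paper's sketch, not a genuinely different route.
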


\begin{proof}
Let $\alpha=\{(m_k,n_k,r_k)\}_{k=0}^\infty$ be a sequence of elements in $\Omega$ and $s \in [0,1)$ is determined by the formula (*). We want to show that $s$ has the coding sequence $\alpha$.
\begin{enumerate} 
\item Suppose the first element in the sequence $\{m_k, n_k, r_k\}_{k=0}^\infty$ is $(2,n_0,1)$ for $n_0 \in \N$. Write $t=R(s)$ i.e.,
\[
t=\frac{s}{1-2s}-n_0 \quad \mbox{for some $n_0 \in \Z^+$}.
\]
By computation, we have 
\[
s=\dfrac{n_0+t}{1+2(n_0+t)}=\dfrac{1}{2+\dfrac{1}{n_0+t}}.
\]
\item Suppose $(m_0,n_0,r_0)=(0,1,-1)$. Similarly, we set $t=R(s)$ so that $t=1-s$. Therefore, 
\[
s=1-t=\dfrac{1}{0+\dfrac{1}{1-t}}.
\]
\end{enumerate}
Repeat this argument by substituting $s=R(s)$. If there exists some element $(m_k, n_k,r_k)=(0,0,1)$ or $(2,0,1)$ in the sequence, then the sequence terminates at the length $k$. We obtain the desired statement.
\end{proof}

\begin{definition}
Let $s \in (0,1)$ and  $\{(m_k,n_k,r_k) \in \Z \times \Z \times \{\pm 1\} \}_{k\geq 0}$ be the coding sequence of $s$. A  splitted expansion of $s$ is defined as follows:  
\begin{itemize}
\item[--] $a_0 = m_0, \quad a_1 = n_0$,
\item[--] $a_{2k}= m_{k}\displaystyle\prod_{i=0}^{k-1} r_i, \quad a_{2k+1} = n_{k}\displaystyle\prod_{i=0}^{k-1} r_i$ \quad for $k>0$.
\item[--] If $s$ is rational and the  coding sequence of $s$ has length $n+1$, then the splitted expansion terminates at $a_{2n-2}$ if $R^n(s)=0$ or at $a_{2n-1}$ if $R^n(s)=\frac{1}{2}$.
\end{itemize}
\end{definition}

For example, the splitted sequence for $s=5/23$ is 
\[
(0; 2,0,2,1,0,1,-2,-1).
\]
Here are several observations of the splitted expansion:
\begin{itemize}
\item If $s$ is a fixed point by $R$, then the splitted expansion is same as  the continued fraction expansion of $s$ which is in the form of 
\[
(0;\overline{2,n}), \quad \mbox{for every integer $n \geq 1$}.
\]
\item The splitted expansion is shifted by 2 digits to the left under the renormalization map $R$. 
\item To translate between the splitted expansion and the signed continued fraction expansion, we have to replace the fragment $\cdots, a_{j-1}, 0, a_{j+1}, a_{j+2}, \cdots$ with $\cdots, a_{j-1}+a_{j+1},a_{j+2},\cdots$. 

For example, $\frac{45}{178}$ has the splitted fraction expansion $(0; 2,0,0,1,-2,-21,-2)$ and its signed continued fraction expansion is $(0; 2,1,-2,-21,-2)$.
\end{itemize}

Suppose $s$ has a splitted expansion $(0; a_1, a_2, \cdots)$ of inifite length. We set the $k$th convergent $(0;a_1,a_2, \cdots, a_k)$ of $s$ as
\[
c_k=\frac{p_k}{q_k}.
\]
The recurrent formulas for $p_k$ and $q_k$ are same to the ones of the continued fraction expansion, i.e.
\begin{itemize}
\item $p_0=0, q_0=1$.
\item If $a_1\neq 0$, then we set $p_1=1, q_1=a_1$ and 
\begin{eqnarray*}
p_m &=& a_m p_{m-1}+p_{m-2}\\
q_m &=& a_m q_{m-1}+q_{m-2}, \quad \mbox{for $m \geq 2, m \in \N$.}
\end{eqnarray*}
\item If $a_1=0$, then $a_2 \neq 0$. We can set $p_1=0, q_1=1$ and $p_2=1, q_2=a_2$. Then $p_m$ and $q_m$ are obtained by the same formula as above for all integer $m\geq 3$.
\end{itemize}
The theorem below says that the splitted expansion gives us a good approximation of irrationals.

\begin{lemma}
Let $s \in [0,1)$ be irrational with infinite splitted expansion $(0;a_1, a_2, \cdots)$. If $\frac{p_k}{q_k} \to s$ as $k \to \infty$, then $|\frac{p_k}{q_k}-s|\leq \frac{6}{q_k^2}$. 
\end{lemma}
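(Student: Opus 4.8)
The plan is to read off from Theorem 3.3 that $s$ is a continued fraction with all partial numerators equal to $1$, establish the determinant identity for its convergents, and then bound the speed of convergence by a growth estimate on the denominators $q_k$ that exploits the combinatorial constraints defining $\Omega$. First I would check that, after the standard equivalence transformation of continued fractions, the nested fraction of Theorem 3.3 is exactly
\[
s=\cfrac{1}{a_0+\cfrac{1}{a_1+\cfrac{1}{a_2+\cdots}}}
\]
with $(a_k)$ the splitted expansion of Definition 3.5: scaling the $k$th numerator $r_{k-1}$ down to $1$ multiplies the subsequent partial quotients $m_k,n_k,\dots$ by the factor $\prod_{i<k}r_i$, which is precisely the sign bookkeeping built into that definition. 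From the recursions for $p_k,q_k$ one then gets $p_kq_{k-1}-p_{k-1}q_k=-(p_{k-1}q_{k-2}-p_{k-2}q_{k-1})=\pm 1$ by a one-line induction (the modified initialization covers the case $a_1=0$), hence $c_k-c_{k-1}=\pm\frac{1}{q_{k-1}q_k}$ and, using the assumed convergence $c_k\to s$, the telescoping identity $s-c_k=\sum_{j>k}\pm\frac{1}{q_{j-1}q_j}$.

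A bare triangle inequality on that tail is too lossy, because a partial quotient $a_k=0$ does occur --- the code $(0,1,-1)$ produces one and $(2,0,1)$ another --- and then $c_k=c_{k-2}$, so the convergent stands still while $1/|q_{k-1}q_k|$ need not be small. I would therefore group the tail two terms at a time, along the pairs $(m_k,n_k)$ of the coding sequence. The two-step transition on the denominators is multiplication by the integer matrix $\begin{pmatrix} 1 & m_k\varepsilon_k \\ n_k\varepsilon_k & m_kn_k+1 \end{pmatrix}$ with $\varepsilon_k=\prod_{i<k}r_i$, and running the $p_k$'s through the same matrix shows that the difference of two consecutive block-end convergents has numerator $\pm n$, where $n$ is the "$n$" of the intervening coding block. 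Since $(2,0,1)$ blocks have $n=0$ they contribute nothing, and if $Q_j$ denotes the denominator of the convergent ending the $j$th coding block, the block-end convergents satisfy
\[
|s-c^{(j_0)}|\ \le\ \sum_{j>j_0}\frac{n_j}{|Q_{j-1}Q_j|}.
\]

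The remaining, and main, task is to bound this sum by a constant times $Q_{j_0}^{-2}$. Tracking the pair of consecutive block-end denominators through the matrices above, one meets two nuisances. A run of $(2,0,1)$ blocks acts by a shear $\begin{pmatrix} 1 & 2\varepsilon \\ 0 & 1 \end{pmatrix}$, which grows only one coordinate and only linearly, so a denominator can stall for a long stretch --- although, by the previous step, such a stretch contributes nothing to the sum. And the signs $\varepsilon_k$ can put the matrix entries in opposition, so that $|Q_j|\ge|Q_{j-1}|$ is not automatic. This is exactly where the $\Omega$-conditions are used: $a_k=0$ only at the prescribed positions, never three consecutive zero entries, and no two consecutive $(0,1,-1)$ blocks. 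From these I would show that across each non-$(2,0,1)$ block the block-end denominator grows by a factor $\gtrsim n+1$ relative to the previous maximum of two consecutive block-end denominators, and that this maximum increases by a definite ratio over any two blocks once intervening shear runs are absorbed into the following block; that collapses the displayed sum to a geometric series bounded by $C\,Q_{j_0}^{-2}$ for an absolute constant $C$. An arbitrary index $k$ is then treated by comparison with the nearest block-end convergent, $|s-c_k|\le|s-c^{(j)}|+|c^{(j)}-c_k|$, the correction being one or two more terms $1/|q_{i-1}q_i|$ dominated by the same estimates. Bookkeeping all the constants --- the shear runs and these corrections are exactly what push the bound above the naive $1/q_k^2$ --- yields $|s-c_k|\le 6/q_k^2$.

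The one genuinely hard point is the denominator estimate of the third step: extracting a uniform, explicit geometric lower bound on products of consecutive block-end denominators --- equivalently, showing that the denominators can neither stall nor partially cancel for too long --- from nothing but the combinatorics of $\Omega$ and the sign pattern $(\varepsilon_k)$. Everything else is routine continued-fraction algebra.
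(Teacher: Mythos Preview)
Your outline is plausible but far more elaborate than what the paper does. The paper's entire proof is a single sentence: pass to the \emph{signed} continued fraction expansion of $s$ and invoke Theorem~11.4 of \cite{S1}. The point is the observation recorded just before the lemma: replacing each fragment $\dots,a_{j-1},0,a_{j+1},a_{j+2},\dots$ of the splitted expansion by $\dots,a_{j-1}+a_{j+1},a_{j+2},\dots$ yields the signed continued fraction of $s$. Under this collapse the convergents $p_k/q_k$ at the non-zero indices survive unchanged (when $a_k=0$ one has $p_k=p_{k-2}$, $q_k=q_{k-2}$, so nothing is lost), and the signed expansion has no zero partial quotients. The approximation estimate is then the already-proved one for signed continued fractions in Schwartz's monograph, with the constant $6$ inherited from there.

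Your route works directly with the splitted expansion and therefore has to manufacture, by hand, the denominator-growth estimate that the reduction makes unnecessary. The ``genuinely hard point'' you flag in your third step --- extracting a uniform geometric lower bound on products of consecutive block-end denominators from the $\Omega$-combinatorics --- is precisely the content of the cited theorem once the zeros are collapsed; you are re-proving it in a more awkward coordinate system. If you want a self-contained argument, it is cleaner to first perform the collapse (your block-end denominators $Q_j$ are exactly the signed-CF denominators) and then run the standard signed-CF estimate, rather than to push the matrix analysis through the shears coming from $(2,0,1)$ blocks.
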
 
The proof is same as Theorem 11.4 in ~\cite{S1} by passing to the signed continued fraction expansion of $s$.

\section{The Fiber Bundle Picture}
The motivation of this section is to construct convex polyhedra and reduce all the calculations to the polyhedra, which is very similar to Schwartz's construction  in ~\cite{S1}. Recall that $\tilde \Delta$ obtained by gluing the parallelogram with vertices 
\[
\pm(-3/2, \sqrt 3/2), \pm(1/2,\sqrt 3/2).
\]
We define 
\[
\mathcal X = \{(x,y,s)| (x,y)\in \tilde \Delta, s \in [0,1]\}.
\]
AS a fiber bundle over $[0,1]$. The fiber above $s$ is the parallelogram $\tilde \Delta$. 
Define the fiber bundle map $F:\mathcal X \to \mathcal X$ as
\[
(x,y,s) \mapsto (\tilde f_s(x,y),s).
\]
Define $\mathcal X(I)$ as the set
\[
\{(x,y,s)| (x,y,s) \in \mathcal X, s \in I\}.
\]
 It is useful to split the fiber bundle $\mathcal X$ as 
\[
\mathcal X = \mathcal X[0,1/2] \cup \mathcal X[1/2, 1].
\]

\subsection{Maximal Domains in $\mathcal X([\frac{1}{2},1))$}
\begin{definition}
A maximal domain of $\mathcal X[\frac{1}{2},1)$ is a maximal subset of $\mathcal X[\frac{1}{2},1)$ such that the bundle map $F$ is entirely defined and continuous.
\end{definition}
For $s \in [\frac{1}{2},1)$, every cross section of the union of maximal domains in $\mathcal X[\frac{1}{2},1)$ at the plane $z=s$ is the partition of $\tilde \Delta$ determined by the tetrahedral PET $\tilde f_s$.  By the assistant of computer, we know that $\mathcal X[\frac{1}{2},1)$ is partitioned into 22 maximal domains. Each maximal domain is a convex polyhedron which has rational vertices. Experimentally, we obtain the fact that every maximal domain in $\mathcal X[\frac{1}{2},1)$ has vertices in the form of 
\[
(\frac{a}{2q}, \frac{b \sqrt 3}{2q}, \frac{p}{q})
\]
for $\frac{p}{q}\in \{1,\frac{1}{2}\}$ and integers $a,b \in (-4q,4q)$. 

\subsection{Maximal domains in $\mathcal Y([0,\frac{1}{2}])$} 
Let $\mathcal A_1, \mathcal A_3, \mathcal H$ be subsets of $\mathcal X[0,\frac{1}{2}]$ whose fiber over $s$ are the sets $A_1, A_3$ and $H_s$, respectively.  Define the reflection $\iota: \mathcal X \to \mathcal X$ as  
\[
\iota(x,y,s)=\iota(-x,-y,s).
\]
Let $\mathcal Y', \mathcal Y$ be the sets 
\[
\mathcal Y' =  \mathcal A_1 \cup \mathcal A_3 \cup \mathcal H, \quad \mathcal Y = \mathcal Y' \cup \iota(\mathcal Y').
\]
The set $\mathcal Y(I)$ is defined similarly as $\mathcal X(I)$, which is a fiber bundle over $t \in I$ such that the fiber above $t$ is $Y_t$.

Now, we consider the maximal domains in $\mathcal X(I)$ for $I \subset (0,\frac{1}{2})$.
\begin{definition}
Let $I$ be a subinterval of $[0,\frac{1}{2}]$. Let $S$ be any one of the six polyhedra in $\mathcal Y(I)$. A maximal domain in $\mathcal Y(I)$ is a maximal subset where the first return $F|_S$ on $S$ is entirely defined and continuous. 
\end{definition}

\begin{figure}[h]
\centering
\includegraphics[scale=0.5]{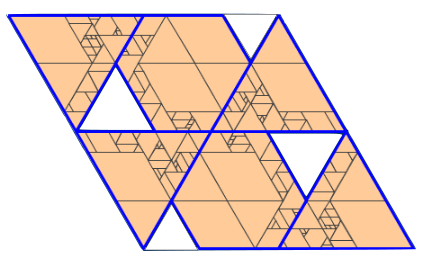}
\caption{A union cross sections of maximal domains in $\mathcal X[\frac{7}{17},\frac{5}{12}]$ at the plane $z=12/29$}
\end{figure}

For any subinterval $I \subset [0,1]$, if the number of maximal domains in $\mathcal Y(I)$ is finite, we can apply the calculation on the vertices of the maximal domains to show the conjugacy of the first return maps. However, the number of maximal domains is not always finite on each arbitrary subintervals of $[0,\frac{1}{2}]$.  The next experimental result  provides a classification of subintervals $I$ of $[\frac{1}{3},\frac{1}{2}]$ such that the number of maximal domains in $\mathcal Y(I)$ is finite. 
\begin{calculation}
Let $I$ be a subinterval of $[\frac{1}{3},\frac{1}{2})$. If $I$ is in one of the following form of continued fraction expansion indexed by $m,n \in \N$, then number of maximal domains in $\mathcal Y(I)$ is fixed. Furthermore, none of the maximal domains vanishes in the interval $I$. 
\begin{enumerate}
\item $A_{m,n}=[(0;2,m,n), \quad (0;2,m,n-1)]$,\qquad $m\geq 2, n\geq 2$
\item $B_{m,n}=[(0;2,m,1,n),\quad (0;2,m,1,n+1)]$, \qquad $m \geq 2, n\geq 1$
\item $C_{m,n}=[(0;2,1,m,n),\quad (0;2,1,m,n+1]]$, \qquad $m \geq 1$ odd, $n\geq 1$
\item $D_{m,n}=[(0;2,1,m,n),(0;2,1,m,n+1)]$,\qquad $m\geq 2$ even, $n\geq 2$
\item $E_{m,n}=[(0;2,1,m,1,n),\quad (0;2,1,m,1,n-1)]$, \qquad $m \geq 2$ even, $n \geq 2$. 
\end{enumerate}
\end{calculation}

\subsection{Notation}
For convenience, we introduce some notation in this section. 

Define $s_{m,n}, t_{m,n}$ and the interval $A_{m,n}, \bar A_{m,n}$ as follows:
\[
s_{m,n}=(0;2,2,2,m,n), \quad t_{m,n}=R(s_{m,n})=(0;2,m,n),
\]
\[
A_{m,n}=[s_{m,n},s_{m,m-1}], \quad \bar A_{m,n}=[t_{m,n},t_{m,n-1}].
\]
Then, we denote $A_{m\geq 2,n\geq 3}, A_{2, n\geq 3}$ to be the union
\[
\bigcup_{m\geq 2, n\geq 3} A_{m,n} = [\frac{12}{29},\frac{5}{12}], \quad \bigcup_{n\geq 3} A_{2,n} = [\frac{12}{29},\frac{29}{70}],
\]
respectively. Similarly, denote $\bar A_{m\geq 2,n\geq 3}, \bar A_{2,n\geq 3} $ as 
\[
 \bigcup_{m\geq 2,n\geq 3} \bar A_{m,n}=[\frac{2}{5},\frac{1}{2}], \quad  \bigcup_{n\geq 3} \bar A_{2,n}= [\frac{2}{5},\frac{5}{12}],
\]
respectively. 

\subsection{Maximal domains in $\mathcal Y(\bar A_{2,3})$}
Note that
\[
A_{2,3}=[\frac{41}{99}, \frac{29}{70}] \quad \mbox{and} \quad \bar A_{2,3}=R(A_{2,3})=[\frac{7}{17},\frac{5}{12}].
\]
$\mathcal Y(\bar A_{2,3})$ is partitioned into 176 maximal domains, each of which is a convex  polytope. Figure 10 shows cross sections of the union maximal domains in $\mathcal X[7/17, 5/12]$ at the plane $z=12/29$. By calculation, the vertices of every maximal domain are in the form of
\[
(\frac{a}{2q}, \frac{b\sqrt 3}{2q}, \frac{p}{q})
\]
where $\frac{p}{q} \in \{\frac{7}{17}, \frac{5}{12}\}$ are end points of the interval $I$ and $a,b \in (-4q, 4q)$ are integers.

\begin{lemma}
For each connected component $S \in \mathcal Y(\bar A_{2,3})$,  $F|_S$ is a piecewise affine map.
\end{lemma}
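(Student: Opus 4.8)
### Proof Plan

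The plan is to reduce the statement to a finite check on the 176 maximal domains of $\mathcal Y(\bar A_{2,3})$ guaranteed by the discussion preceding the lemma. First I would recall that $S$, being a connected component of the set of maximal domains, is by definition a maximal subset on which the first return $F|_S$ is entirely defined and continuous; and that each such $S$ is a convex polytope with vertices of the form $(\tfrac{a}{2q},\tfrac{b\sqrt 3}{2q},\tfrac{p}{q})$ with $p/q \in \{7/17, 5/12\}$. The key structural fact I want to exploit is that $F|_S$ is the first return of the fiber bundle map $F$, and $F$ on $\mathcal X$ is built from the three lifted maps $\tilde f_0, \tilde f_1, \tilde f_2$, each of which acts on its respective fiber by a translation $(x,y)\mapsto (x,y)+\sigma_i s_i\,\omega_i \bmod \Lambda$ whose translation vector depends affinely (indeed linearly) on the parameter $s$. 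Composing finitely many of these and passing to a first return under a fixed return time $k$ on the interior of $S$, the resulting map has the form $(x,y,s)\mapsto (x,y) + W(s),\ s)$ where $W(s)$ is an integer-affine combination of $s$ and $\tfrac{\sqrt3}{2}s$ — exactly the vector form $(A + B\tfrac{s}{2}, C + D\tfrac{\sqrt3}{2}s)$ with $A,B,C,D\in\Z$ recorded earlier in the excerpt. Such a map is affine in $(x,y,s)$, hence piecewise affine on $S$.

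The key steps, in order: (1) Fix a connected component $S$. By maximality and continuity of $F|_S$ on $\Int S$, the return time function $x\mapsto k(x,s)$ that picks out the first iterate of $\tilde f_s$ landing back in the base of $S$ is locally constant on $\Int(\mathcal Y(\bar A_{2,3}))$ restricted to $S$; since $S$ is connected and $F|_S$ is continuous there, this return time is globally a single constant $k=k_S$ on $\Int S$. (2) On $\Int S$ the map $F|_S$ therefore equals the $k_S$-fold composition $F^{k_S}$ with each intermediate step taking a definite branch ($\sigma_i = \pm1$) of one of the elementary maps $\tilde f_i$; since the composition only passes through a fixed sequence of branches over all of $\Int S$ (again by connectedness plus continuity, the branch choices cannot jump), the resulting formula for $F|_S(x,y,s)$ is a single affine expression. (3) Each elementary branch contributes a translation vector that is $\Z$-affine in $s$ and independent of $(x,y)$, and reduction mod $\Lambda$ is, on the interior of $S$ where no wraparound boundary is crossed, just a fixed lattice translation; hence $F|_S(x,y,s) = (x + a(s), y + b(s), s)$ with $a,b$ affine functions of $s$, which is an affine map of $\mathbb R^3$. (4) Conclude $F|_S$ is affine on each $\Int S$, i.e. $F|_S$ is piecewise affine on $\mathcal Y(\bar A_{2,3})$, the pieces being the (finitely many) maximal domains; extend to closures by continuity where defined.

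The main obstacle is step (1)–(2): rigorously justifying that the return time and the sequence of branch choices are genuinely constant over each connected component $S$, rather than merely locally constant. The clean way to do this is to turn the argument around and use it as the \emph{definition} of the maximal domains: one shows that the locus where $F^j$ (for each fixed $j$) is defined and continuous, together with the locus where the $j$-th iterate first returns to the base of one of the six polyhedra of $\mathcal Y$, cuts $\mathcal Y(\bar A_{2,3})$ into finitely many open convex pieces on each of which the return time and branch word are constant by construction; the 176-piece count from the computer-assisted calculation is exactly this decomposition. On each such open piece the affine formula is immediate, and one then checks (a finite verification on the listed rational vertices, of the type already invoked in Section 4.1) that these pieces are precisely the maximal domains $S$. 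I would present the lemma's proof as this reduction, citing the computer-assisted enumeration for the combinatorics and noting that the affineness on each piece is forced by the elementary structure of $\tilde f_0,\tilde f_1,\tilde f_2$.
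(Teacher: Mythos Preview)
Your approach is essentially the same as the paper's---show that the translation vector has the form $(A+B\tfrac{s}{2},\,C+D\tfrac{\sqrt3}{2}s,\,0)$ with integers $A,B,C,D$ that are locally constant, and then invoke finiteness of the partition into maximal domains---but you have misread what $S$ denotes in the lemma. In the paper, $S$ is one of the six connected polyhedra making up $\mathcal Y(\bar A_{2,3})$ (namely $\mathcal A_1,\mathcal A_3,\mathcal H$ and their images under $\iota$), \emph{not} an individual maximal domain. The first return $F|_S$ is therefore \emph{not} continuous on all of $\Int S$, so your step~(1) argument (``since $S$ is connected and $F|_S$ is continuous there, this return time is globally a single constant $k_S$ on $\Int S$'') fails as written: the return time genuinely varies across $S$.

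The correct statement is that the return time and the branch word are constant on each \emph{maximal domain} contained in $S$, and the lemma asserts only that $F|_S$ is \emph{piecewise} affine, the pieces being precisely these maximal domains. Once you straighten out the notation---apply your steps (1)--(3) to a maximal domain $P\subset S$ rather than to $S$ itself---your argument and the paper's coincide. The paper's version is simply much terser: it asserts the explicit form of the translation vector, observes that the integers $A,B,C,D$ do not change under small perturbations of $(x,y,s)$, and concludes from the finiteness of the partition (176 pieces) that $F|_S$ is piecewise affine.
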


\begin{proof}
For each point $(x,y,s) \in \mathcal Y(\bar A_{2,3})$, we have 
\[
F|_S(x,y,s)=(x,y,s)+(A+B\frac{s}{2},C+D\frac{s \sqrt 3}{2}, 0)
\]
where $A,B,C,D \in \Z$. If we vary the point $(x,y,s)$ in a neighborhood of $(x,y,s)$, the integers $A,B,C,D$ do not change.  Since $\mathcal Y(\bar A_{2,3})$ is partitioned into finitely many maximal domains, $F|_S$ is a piecewise affine map on $S$. 
\end{proof}

\begin{definition}
A maximal domain $P$ in $\mathcal Y(\bar A_{m
\geq 2,n\geq 3})$ is a permanent maximal polyhedron if $P$ satisfies the following condition: 
\begin{itemize}
\item At least one vertex of $P$ has $z$-coordinate $2/5$,
\item  At least one vertex of $P$ has $z$-coordinate $1/2$.
\end{itemize}
\end{definition}

\begin{definition}
A maximal domain $P$ in $\mathcal Y(\bar A_{m \geq 2,n\geq 3})$ is called resident maximal polyhedron if $P$ satisfies the following condition: 
\begin{itemize}
\item At least one vertex of $P$ has $z$-coordinate $2/5$,
\item  At least one vertex of $P$ has $z$-coordinate $5/12$,
\item All the vertices $v=(x,y,z)$ of $P$ has $2/5 \leq z \leq 5/12$.
\end{itemize}
\end{definition}

It's equivalent to say that if a maximal domain $P$ in $\mathcal Y(\bar A_{m \geq 2, n\geq 3})$ does not vanish between the plane $z=\frac{2}{5}$ and $z=\frac{1}{2}$, then $P$ is a permanent polyhederon. Note that $2/5, 1/2$ are two end points of the interval $A_{m\geq 2, n\geq 3}$. Moreover, if a maximal domain $P \subset \mathcal Y(\bar A_{2, n \geq 3})$ lies between the plane $z=\frac{2}{5}$ and $z=\frac{5}{12}$ and the intersection of $P$ with each plane is non-empty, then $P$ is a resident maximal polyhedron in $\mathcal Y(\bar A_{m\geq 2, n\geq 3})$. These notations help us to classify the maximal domains restricting to the smaller intervals $\bar A_{2,3}$.

\begin{definition}
If a maximal domain $P \subset \mathcal Y(\bar A_{2,3})$ is obtained by chopping from a resident maximal domain $Q$ in $\mathcal Y(\bar A_{m\geq  2, n \geq 3})$, we say $P$ is a primary maximal domain in $\mathcal Y(\bar A_{2,3})$. More precisely, $P$ is primary if 
\[
P = \{(x,y,z): (x,y,z) \in Q \mbox{ and }z \in \bar A_{2,3}\}, 
\]
for some resident maximal domain $Q$ in $\mathcal Y(\bar A_{m\geq 2,n \geq 3})$. 
\end{definition}

In $\mathcal Y(\bar A_{2,3})$, there are 176 maximal domains,  where 150 are primary. Let  $\mathcal M_1$ be the set of resident maximal polyhedra in $\mathcal Y(\bar A_{m\geq 2, n\geq 3})$ and $\mathcal M_1(\bar A_{2,3})$ be the set of primary maximal domains in $\mathcal Y(\bar A_{2,3})$.  Denote $\mathcal M_2$ to be the set of rest 26 maximal polyhedra in $\mathcal Y(\bar A_{m \geq 2,n\geq 3})$ which also produce maximal domains in $\mathcal Y(\bar A_{2,3})$. These are the polyhedra lying strictly above the plane $z=\frac{2}{5}$. We list them in the last section of the paper. 

\section{Proof of the Main Theorem}
Before going to the proof, we provide the explicit formula of the similarity $\phi_s: \tilde \Delta \to \tilde \Delta$ which appeared in the renormalization Theorem 2.1.
\[
\phi_s(x,y) = \left\{
\begin{array}{l l}
c(x+1,y)+(-1,0) &\quad \mbox{if $(x,y) \in A_1$}\\
c(x-1,y)+(1,0) &\quad \mbox{if $(x,y) \in \iota(A_1)$}\\
c(x,y) &\quad \mbox{if $(x,y,z) \in  A_3, \iota(A_3)$}\\
c(x+\frac{1+s}{2},y+ \frac{\sqrt 3(1-s)}{2})-(\frac{1+t}{2}, \frac{\sqrt 3(1-t)}{2}) &\quad \mbox{if $(x,y,z) \in  H_t$}.\\
c(x-\frac{1+s}{2},y- \frac{\sqrt 3(1-s)}{2})+(\frac{1+t}{2}, \frac{\sqrt 3(1-t)}{2}) &\quad \mbox{if $(x,y,z) \in \mathcal \iota(H_t)$}.
\end{array}
\right.
\] 
where the scalar $c=\frac{1}{1-2s}$.
Then, we can define the set $Z_s$ in theorem 2.1 as 
\[
Z_s=\phi^{-1}_s(Y_t)
\]
and $\mathcal Z(I)$ be the fiber bundle over $I$ such that the fiber above $s \in [0,1]$ is $Z_s$. A maximal domain in $\mathcal Z(I)$ is defined in the same way as the maximal in $\mathcal Y(I)$. Moreover, a maximal domain $Q$ in $\mathcal Z(A_{m\geq 2,n\geq 3})$ is a permanent maximal polyhedron if $Q$ satisfies the following properties:
\begin{itemize}
\item $Q$ has at least one vertex with $z$-coordinate $12/29$,
\item $Q$ has at least one vertex with $z$-coordinate $5/12$.
\end{itemize}

We say $Q$ a resident maximal polyhedron in $\mathcal Z(A_{m \geq 2, n \geq 3})$, if 
\begin{itemize}
\item $Q$ has at least one vertex with $z$-coordinate $12/29$ 
\item $Q$ has at least one vertex with $z$-coordinate $29/70$.
\item The $z$-coordinates of all vertices of $Q$ should satisfy $12/29 \leq z \leq 29/70$.
\end{itemize}

Let $\mathcal N_1$ be the collection of resident maximal polyhedron in $\mathcal Z(A_{m\geq 2,n\geq 3}) $. By direct computation, there are 162 maximal domains in $\mathcal Z(A_{2,3})$, 136 of which are chopped from resident maximal domains in $\mathcal Z(A_{2,n\geq 3})$. Let us denote the set of primary maximal domains by $\mathcal N_1(A_{2,3})$. Moreover, there are $26$  maximal polyhedra in $\mathcal Z(A_{m\geq 2,n\geq 3})$ from which the non-primary maximal domains in $\mathcal Z(A_{2,3})$ can be obtained. Denote the set of these 26 non-resident maximal polyhedra in $\mathcal Z(A_{m\geq 2, n\geq 3})$ by $\mathcal N_2$.

\subsection{Renormalization on the subinterval $A_{2,3}$}
The goal in this section is to show that for all $s \in A_{2,3}$, $\tilde f_s|_{Y_t}$ and $\tilde f_t|_{Z_s}$ are conjugate by the similarity map $\phi_s$. To prove this, we've attached 1-dimensional parameter space to the planar torus $\tilde \Delta$ and want to apply the calculation in $\R^3$. For calculation, we always refer to open polyhedra.

First, we piece together the similarities $\phi_s$ on $\tilde \Delta$ to construct a piecewise affine map $\phi: \mathcal X \to \mathcal X$ on the fiber bundle which is defined as 
\[
\phi(x,y,t)=(\phi_s(x,y),\frac{s}{1-2s}-2).
\]
\begin{lemma}
Fix a parameter $s\in A_{2,3}=[\frac{41}{99},\frac{29}{70}]$. Let $t=R(s)$. The first return map $\tilde f_s|_{Z_s}$ satisfies
\[
\phi_s \circ \tilde f_s|_{Z_s} = \tilde f_t|_{Y_t} \circ \phi_s
\]
\end{lemma}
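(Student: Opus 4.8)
The plan is to reduce the assertion $\phi_s \circ \tilde f_s|_{Z_s} = \tilde f_t|_{Y_t}\circ\phi_s$ to a finite computer-verifiable check on the maximal domains in $\mathcal Y(\bar A_{2,3})$ and $\mathcal Z(A_{2,3})$, exploiting the fiber-bundle picture and the affine structure established in Lemma 4.2 and the surrounding discussion. First I would unwind the definitions: since $Z_s=\phi_s^{-1}(Y_t)$ by construction, the identity $\phi_s\circ\tilde f_s|_{Z_s}=\tilde f_t|_{Y_t}\circ\phi_s$ is equivalent to saying that for every $x\in Z_s$, if $k=k(x)$ is the first-return time of $x$ into $Z_s$ under $\tilde f_s$, then $\phi_s(x)$ has first-return time into $Y_t$ under $\tilde f_t$ also equal to $k$, and moreover $\phi_s\circ\tilde f_s^{\,k}=\tilde f_t^{\,k}\circ\phi_s$ at $x$. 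Because $\phi_s$ is a similarity with scale $c=\frac{1}{1-2s}$ and $\tilde f_s$, $\tilde f_t$ are PETs with translation vectors of the controlled arithmetic shape $(A+B\frac s2, C+D\frac{\sqrt3}{2}s)$, this is really the statement that $\phi_s$ intertwines the \emph{orbit structure} of the two first-return maps; so it suffices to check it on the common refinement of $Z_s$ (resp. $Y_t$) by the domains of the first-return maps, i.e. on the maximal domains in $\mathcal Z(A_{2,3})$ and $\mathcal Y(\bar A_{2,3})$.

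Next I would make the check \emph{uniform in $s$} rather than pointwise. The bundle map $\phi:\mathcal X\to\mathcal X$, $\phi(x,y,s)=(\phi_s(x,y),\frac{s}{1-2s}-2)$, is piecewise affine and carries the fiber $Z_s$ over $s$ to the fiber $Y_t$ over $t=R(s)$; and by Lemma 4.2 the first-return map $F|_S$ on each maximal domain $S\subset\mathcal Y(\bar A_{2,3})$ is piecewise affine with integer-controlled translation parts, likewise for $\mathcal Z(A_{2,3})$. Since $\mathcal Y(\bar A_{2,3})$ has $176$ maximal domains and $\mathcal Z(A_{2,3})$ has $162$, each a convex polytope with vertices of the explicit rational form $(\frac{a}{2q},\frac{b\sqrt3}{2q},\frac pq)$ with $\frac pq\in\{\frac{7}{17},\frac{5}{12}\}$ (resp. $\{\frac{41}{99},\frac{29}{70}\}$), the conjugacy becomes a finite linear-algebra verification: $\phi$ maps each maximal domain of $\mathcal Z(A_{2,3})$ affinely and bijectively onto a maximal domain of $\mathcal Y(\bar A_{2,3})$, and on that domain $\phi\circ F|_{Z} = F|_{Y}\circ\phi$ holds because both sides are affine and agree at the (finitely many, rational) vertices. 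Concretely, for each of the $162$ domains I would: identify the first-return time $k$ and the composite translation $v_s=(A+B\frac s2,C+D\frac{\sqrt3}{2}s,0)$; identify the image domain and its first-return data $w_t$; and verify the affine identity $\phi_s(x+v_s)=\phi_s(x)+w_t$ for $(x,y,s)$ ranging over that polytope, which by linearity reduces to checking the linear part of $\phi_s$ sends $v_s$ to $w_t$ — an identity between two vectors each affine in $s$, hence checkable at two values of $s$.

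The main obstacle, and the step that genuinely requires the computer, is the \emph{combinatorial bookkeeping of first-return times}: one must confirm that the partition of $Z_s$ into first-return domains of $\tilde f_s|_{Z_s}$ is exactly the cross-section of the maximal-domain decomposition of $\mathcal Z(A_{2,3})$, and similarly that $\phi$ respects the matching of these decompositions on both sides without any domain degenerating or splitting as $s$ varies over $A_{2,3}=[\frac{41}{99},\frac{29}{70}]$. This is where the earlier structural work pays off: by Calculation 4.8 (case $A_{m,n}$ with $m=2$) and the definitions of permanent/resident/primary maximal polyhedra, the $150$ primary domains of $\mathcal Y(\bar A_{2,3})$ (resp. the $136$ primary domains of $\mathcal Z(A_{2,3})$) are honest restrictions of resident polyhedra that persist across the whole interval, so no combinatorial change occurs there; the remaining $26$ non-primary polyhedra in each case, listed explicitly in the last section, must be handled individually. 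So the proof schema is: (i) reduce to the uniform affine-intertwining statement on maximal domains; (ii) invoke Lemma 4.2 to know both sides are piecewise affine; (iii) use the vertex-matching $\phi(\text{vertices of }Z\text{-domain})=\text{vertices of the corresponding }Y\text{-domain}$ together with $\phi_s(v_s)=w_t$ on linear parts; and (iv) dispatch the $162$ cases by direct computation, the primary ones $150+12$ en masse via the resident-polyhedron inheritance and the $26$ non-primary ones from the explicit list. I would expect step (iv)'s non-primary cases to be the only place where something subtle could go wrong (a domain shrinking to measure zero at an endpoint $\frac{41}{99}$ or $\frac{29}{70}$), and I would verify at the outset that every listed polyhedron meets both planes $z=\frac{12}{29}$ and $z=\frac{5}{12}$ so that no such collapse happens inside $A_{2,3}$.
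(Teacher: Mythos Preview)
Your overall strategy---lift to the three-dimensional fiber bundle, use piecewise affineness (Lemma~4.2), and reduce to a finite vertex computation---is the paper's strategy. But there is a genuine gap in how you set up the reduction.

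In your first paragraph you assert that the conjugacy $\phi_s\circ\tilde f_s|_{Z_s}=\tilde f_t|_{Y_t}\circ\phi_s$ is equivalent to the first-return times under $\tilde f_s$ and $\tilde f_t$ being equal, with $\phi_s\circ\tilde f_s^{\,k}=\tilde f_t^{\,k}\circ\phi_s$. This is false: a conjugacy of \emph{first-return maps} says nothing about the number of steps the ambient maps take to return, and in fact the itineraries of $\tilde f_s$ through $\tilde\Delta\setminus Z_s$ and of $\tilde f_t$ through $\tilde\Delta\setminus Y_t$ are combinatorially different.

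This error resurfaces in your main verification step, where you claim that $\phi$ carries each maximal domain of $\mathcal Z(A_{2,3})$ bijectively onto a maximal domain of $\mathcal Y(\bar A_{2,3})$. The counts already rule this out: $162\neq 176$. Maximal domains encode the full first-return itinerary, not just the net first-return translation, so the two partitions are not matched by $\phi$. What the paper actually establishes is only a \emph{refinement} relation. For the $26$ non-resident polyhedra there is a genuine bijection $P_i=\phi(Q_j)$, and something close to your scheme applies. But for the $150$ resident $P_i\in\mathcal M_1$ versus the $136$ resident $Q_j\in\mathcal N_1$ one only has inclusions $P_i\subset\phi(Q_j)$. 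The conjugacy is then proved not by matching vertices and translation vectors one-to-one, but by checking, for each $P_i$, the image inclusion $\phi^{-1}\circ F|_S(P_i)\subset F|_{\phi^{-1}(S)}(Q_j)$, together with pairwise disjointness of the interiors of the $\phi^{-1}\circ F|_S(P_i)$ and equality of total volume with $\sum_j\mathrm{Volume}(Q_j)$. Your proposed identity $\phi_s(x+v_s)=\phi_s(x)+w_t$ on matched domains cannot substitute for this inclusion--disjointness--volume argument, because the matching you assume does not exist.
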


\proof \textit{Step 1.} For every non-resident maximal polyhedron $P_i$, $i=1,\cdots, 26$ in $\mathcal M_1$, we check that $P_i$ satisfies the  following properties: 
\begin{enumerate}
\item There exists a non-resident maximal domain $Q_j$ in $\mathcal N_2$ such that 
\[
P_i = \phi(Q_j).
\]
It follows that there is a one-to-one correspondence between the elements in $\mathcal M_2$ and the ones in $\mathcal N_2$. For computation, it is sufficient to check that the set of vertices of $Q_j$ are $\{\phi(V_0), \cdots, \phi(V_n)\}$ where $V_0, \cdots, V_n$ are the vertices of $P_i$.
\item Let $S$ be a polyhedra in $\mathcal Y(\bar A_{m\geq 2,n\geq 3})$ such that $P_i \subset S$, then $\phi(S) \subset \mathcal Z(A_{m\geq 2, n\geq 3})$. Moreover, the maximal domains satisfy the following condition: 
\[
\phi^{-1} \circ F|_{S}(P_i) \subset F|_{\phi^{-1}(S)}(Q_j).
\]
\item We denote the polyhedra $\phi^{-1} \circ F|_{S}(P_i)$ by $P_{ij}$ if $P_i$ satisfies the inclusion above. We check the fact: 
\[
\Int(P_{ij} )\cap \Int(P_{i'j'}) =\emptyset \quad \mbox{for each pair of $i \neq i'$}.
\]
\item
\[
\sum \mbox{Volume}(P_{ij})=\sum \mbox{Volume}(Q_j), \quad \mbox{for $i,j=1,\cdots 26$}.
\]
\end{enumerate}

\textit{Step 2.} Next, we consider the points in resident maximal polyhedra. We apply the calculation on the set of resident maximal polyhedra because if the conjugacy is satisfied, then it follows that the renormalization scheme exists for all points in $Z_s$ when $s \in A_{2,3}$. Since there is no one-to-one correspondence between the resident maximal domains in $\mathcal M_1$ and the ones in $\mathcal N_1$, we cannot apply the same calculation as before.  However, by computer assistance, we find that each element in $\mathcal M_1$ is a subpolyhedron of a resident maximal polyhedron in $\mathcal N_2$ up to a similarity.  We apply the similar calculations as in Step 1 and check the following properties:
\begin{enumerate}
\item For every resident maximal polyhedron $P_i \in $ in $\mathcal M_1, i=1,\cdots, 150$, there exists a resident maximal polyhedron $Q_j \in \mathcal N_1$ such that $P_i \subset \phi(Q_j)$ for some $j\in \{1,\cdots, 136\}$.
\item Let $S$ be the connected component of $\mathcal Y(\bar A_{m\geq 2, n\geq 3})$ such that $P_i \subset S$. Denote $P'_i$ as the polyhedron 
\[
\phi^{-1} \circ F|_{S} (P_i)
\]
$P'_i$ satisfies that 
\[
P'_i \subset F|_{\phi^{-1}(S)} (Q_j).
\]
\item 
\[
\Int(P'_i) \cap \Int(P'_j) = \emptyset \quad \mbox{for $i \neq j$}.
\]
\item 
\[
\sum_{i=1}^{150} \mbox{Volume}(P'_{i}) = \sum_{j=1}^{136} \mbox{Volume}(Q_j).
\]
\end{enumerate}
Hence, we've shown that the map $\tilde f_s$ is renormalizable when $s \in A_{2,3}=[\frac{41}{99},\frac{29}{70}]$.  \qed

\begin{remark}
Since the size of the data is too large to include in this paper, I provide the code on my website and one can check the data of all the resident maximal polyhedra from my website. The URL is: $math.brown.edu/ \sim renyi$ .

\end{remark}

\subsection{Renormalization on the interval $A_{2,4}$}
\begin{lemma}
Lemma 4.4 holds for parameter $s \in A_{2,4}=[\frac{53}{128}, \frac{29}{70}]$.
\end{lemma}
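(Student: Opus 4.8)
Write $s\in[\frac{53}{128},\frac{29}{70}]$. For $s\in A_{2,3}=[\frac{41}{99},\frac{29}{70}]$ the identity $\phi_s\circ\tilde f_s|_{Z_s}=\tilde f_t|_{Y_t}\circ\phi_s$ with $t=R(s)$ is precisely Lemma 4.4, so the remaining content is the sub-interval $A_{2,4}=[\frac{53}{128},\frac{41}{99}]$, whose renormalized image is $\bar A_{2,4}=R(A_{2,4})=[(0;2,2,4),(0;2,2,3)]=[\frac{9}{22},\frac{7}{17}]$. The plan is to rerun the computer-assisted argument behind Lemma 4.4 on this interval.

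First I would record that the combinatorics stays finite and stable on the new band. The interval $\bar A_{2,4}$ is of the form listed in Calculation 4.2 (case (1) with $m=2$, $n=4$), so $\mathcal Y(\bar A_{2,4})$ decomposes into a fixed finite number of convex maximal domains, none of which vanishes inside $\bar A_{2,4}$, with vertices of the form $(\frac{a}{2q},\frac{b\sqrt 3}{2q},\frac{p}{q})$, $\frac{p}{q}\in\{\frac{9}{22},\frac{7}{17}\}$, $a,b\in(-4q,4q)$. Since $\bar A_{2,4}\subset[\frac{2}{5},\frac{5}{12}]$ and $A_{2,4}\subset[\frac{12}{29},\frac{29}{70}]$, the sets $\mathcal M_1$ of resident maximal polyhedra in $\mathcal Y(\bar A_{m\geq 2,n\geq 3})$ and $\mathcal N_1$ of resident maximal polyhedra in $\mathcal Z(A_{m\geq 2,n\geq 3})$, which span the bands $[\frac{2}{5},\frac{5}{12}]$ and $[\frac{12}{29},\frac{29}{70}]$ respectively, chop cleanly to the sub-bands $\bar A_{2,4}$ and $A_{2,4}$ and produce the primary maximal domains there; the non-primary maximal domains of $\mathcal Y(\bar A_{2,4})$ and $\mathcal Z(A_{2,4})$ arise, as for $\bar A_{2,3}$, from the finitely many polyhedra of $\mathcal M_2$ and $\mathcal N_2$ that meet those bands. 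The number of maximal domains in $\mathcal Z(A_{2,4})$, like that for $\mathcal Z(A_{2,3})$, is obtained by direct computation.

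Then I would carry out the two-step verification exactly as for Lemma 4.4. Step 1 (non-primary pieces): for each non-primary $P_i$ coming from $\mathcal M_2$, exhibit the matching $Q_j$ from $\mathcal N_2$ with $P_i=\phi(Q_j)$ — a finite check that $\phi$ carries the vertex set of $Q_j$ onto that of $P_i$ — then verify $\phi^{-1}\circ F|_S(P_i)\subset F|_{\phi^{-1}(S)}(Q_j)$ for the ambient maximal domain $S$, interior-disjointness of the resulting pieces $P_{ij}$, and $\sum\mbox{Volume}(P_{ij})=\sum\mbox{Volume}(Q_j)$. Step 2 (primary pieces, where the two sides are not in bijection): use that each element of $\mathcal M_1$ restricted to $\bar A_{2,4}$ is a subpolyhedron of $\phi$ applied to some element of $\mathcal N_1$ restricted to $A_{2,4}$, set $P'_i=\phi^{-1}\circ F|_S(P_i)$, and check $P'_i\subset F|_{\phi^{-1}(S)}(Q_j)$, interior-disjointness of the $P'_i$, and $\sum\mbox{Volume}(P'_i)=\sum\mbox{Volume}(Q_j)$. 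Adding the volume contributions of Steps 1 and 2 shows the verified pieces exhaust $Z_s$ for every $s\in A_{2,4}$, which gives the identity; together with Lemma 4.4 this proves the claim on all of $[\frac{53}{128},\frac{29}{70}]$.

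The main obstacle is the one already present for $A_{2,3}$: the resident maximal polyhedra on the domain side and on the image side are not in one-to-one correspondence, so one cannot match pieces directly and must instead rely on the containments $P'_i\subset F|_{\phi^{-1}(S)}(Q_j)$ together with the volume counts to be sure the verified pieces account for all of $Z_s$ rather than a proper subset; those volume identities are the load-bearing computations and genuinely have to be redone for the new band, not quoted. The only additional point is to confirm that the maximal-domain decompositions of $\mathcal Y(\bar A_{2,4})$ and $\mathcal Z(A_{2,4})$ are combinatorially constant across these slightly wider windows and that no domain degenerates at the new left endpoints $\frac{9}{22}$ and $\frac{53}{128}$; on the $\mathcal Y$ side this is the ``no vanishing'' clause of Calculation 4.2, and on the $\mathcal Z$ side it is verified directly by computer as in the proof of Lemma 4.4.
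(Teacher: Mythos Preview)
Your overall architecture is right, but there is a concrete gap in the decomposition of the non-primary pieces. You assert that ``the non-primary maximal domains of $\mathcal Y(\bar A_{2,4})$ and $\mathcal Z(A_{2,4})$ arise, as for $\bar A_{2,3}$, from the finitely many polyhedra of $\mathcal M_2$ and $\mathcal N_2$ that meet those bands.'' This is false. Recall that $\mathcal M_2$ was defined as the set of non-resident maximal polyhedra in $\mathcal Y(\bar A_{m\geq 2,n\geq 3})$ \emph{which produce maximal domains in} $\mathcal Y(\bar A_{2,3})$; these all have a nondegenerate cross-section somewhere in $[7/17,5/12]$. When you drop to $\bar A_{2,4}=[9/22,7/17]$, a further class of polyhedra becomes relevant: those lying entirely at or below $z=t_{2,3}=7/17$ and meeting that plane only in a point or a segment (fewer than three vertices at that height). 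These polyhedra contribute nothing to $\mathcal Y(\bar A_{2,3})$, so they are not in $\mathcal M_2$, yet they do chop to genuine non-primary maximal domains in $\mathcal Y(\bar A_{2,4})$. The paper isolates exactly this phenomenon with the notion of a \emph{newly-appeared} maximal domain at the parameter $t_{2,3}$ (respectively $s_{2,3}$ on the $\mathcal Z$ side), and computes that there are $16$ such polyhedra on each side; of the $28$ non-primary domains in $\mathcal Z(A_{2,4})$, only $12$ come from $\mathcal N_2$ and the remaining $16$ are newly-appeared.

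Because you omit this class, your Step~1 as written cannot close: the polyhedra $P_{ij}$ coming only from $\mathcal M_2$ will not exhaust the non-primary part of $\mathcal Z(A_{2,4})$, and the volume identity $\sum\mathrm{Volume}(P_{ij})=\sum\mathrm{Volume}(Q_j)$ will fail. The fix is exactly what the paper does: argue that any non-primary domain in $\mathcal Y(\bar A_{2,4})$ not obtained from $\mathcal M_2$ must have maximal $z$-coordinate equal to $t_{2,3}$ with at most two vertices on that plane (otherwise it would already have been seen in $\bar A_{2,3}$), enumerate these newly-appeared polyhedra, observe that $\phi$ sets up a one-to-one correspondence between the newly-appeared polyhedra on the $\mathcal Y$ side at $t_{2,3}$ and those on the $\mathcal Z$ side at $s_{2,3}$, and then run the vertex-matching and volume check of Step~1 on this additional finite list. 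Only after that does the total-volume bookkeeping go through.
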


We want to apply the same method as used in the previous case. Therefore, we need to classify the maximal domains in $\mathcal Y(\bar A_{2,4})$ and $\mathcal Z(A_{2,4})$ first. The primary maximal domains in $\mathcal Y(\bar A_{2,4})$ are the maximal domains chopped from the resident maximal polyhedera  defined in section 4.4. 

The non-primary maximal domains of $\bar Y(A_{2,4})$ are either obtained by chopping from the elements in $\mathcal M_2$ or they are the newly-appeared maximal domains defined as follows:

\begin{definition}
A maximal domain $P$ in $\mathcal Y(\bar A_{m\geq 2,n\geq 3})$ is newly-appeared at the parameter $s$ if it satisfies the following:
\begin{itemize}
\item $s=\max\{z: (x,y,z)\in P\}$.
\item The number of vertices in $P$ with $z$-coordinate being $s$ is less than 3. 
\end{itemize}
\end{definition}
If a maximal domain $P$ is newly-appeared, then $P$ lies below the plane $z=s$ and it can only touches the plane $z=s$ at a point or a line segment. The primary and newly-appeared maximal domains of $\mathcal Z(A_{2,n\geq 3})$ at $s$ are defined similarly by replacing $t_{m,n}$ with $s_{m,n}$.

 By computation, $\mathcal Y(\bar A_{2,4})$ is partitioned into 178 maximal domains and 150 of them are primary. Then the set $\mathcal Z(A_{2,4})$ has 136 primary and 28 non-primary maximal domains. Since we've shown that the renormalization exists for all points in every resident maximal polyhedron, we are left to check the points in non-primary maximal domains. Among the 28 non-primary maximal domains in $\mathcal Z(A_{2,4})$ (or $\mathcal Y(\bar A_{2,4})$), there are 12 of them are obtained by chopping from non-resident maximal domains in $\mathcal Z(A_{m\geq 2, n\geq 3})$ (or $\mathcal Y(\bar A_{m\geq 2,n\geq 3})$), which we have already done the calculation. 

If $P$ is a non-primary maximal domain in $\mathcal Y(\bar A_{2,4})$ but does not belong to the above case, then $P$ must be chopped from a newly-appeared polyhedron at the parameter $t_{2,3}$. This is because the maximal z-coordinate of all points in $P$ must be $t_{2,3}$. Moreover, if $P$ has more than 2 vertices with $z=t_{2,3}$, then $P$ is either primary or inherited from a maximal polyhedron appeared in $Y(A_{2,3})$. The same argument works for the case of $\mathcal Z(A_{2,4})$. The lists of all newly-appeared maximal domains in $\mathcal Y(\bar A_{2,4})$ and $\mathcal Z(A_{2,4})$ are provided in section 6.

There is a one-to-one correspondence between the newly-appeared maximal polyhedra in in $\mathcal Y(A_{m\geq 2, n\geq 3})$at $t_{m,n}$ and the ones in $\mathcal Z(A_{m\geq 2,n\geq 3})$. We apply the same calculation  as in  lemma 5.1. Therefore, we show that when the parameter $s \in [\frac{53}{128},\frac{41}{99}]$, it is true that
\[
\tilde f_s|_{Z_s}= \phi_s \circ \tilde f_t|_{Y_t} \circ \phi^{-1}_s.
\]  

\subsection{Renormalization on the interval $[\frac{1}{2},1)$}
We want to show that $\tilde f_s$ is conjugate to $\tilde f_{1-s}$ when $s\in [\frac{1}{2},1)$ by a piecewise translation $\phi_s$. Recall that $\phi_s$ is the map interchanging the upper half and lower half of the torus $\tilde \Delta$. Therefore, we can piece together the map $\phi_s$ for $s \in [\frac{1}{2},1)$ to get an affine map $\phi: \mathcal X \to \mathcal X$ in $\R^3$: 
\[
\phi(x,y,s)=(\phi_s(x,y),1-s).
\]
It is easy to see that the affine map $\phi$ is an involution as well. As discussed in Section 4.1, there is a partition $\mathcal P=\{\displaystyle P_i\}_{i=1}^{22}$ of $\mathcal X([\frac{1}{2},1))$ such that each $P_i$ is a maximal domains in $\mathcal X([\frac{1}{2},1))$ determined by the fiber bundle map $F: \mathcal X \to \mathcal X$ 
\[
(x,y,s) \mapsto (\tilde f_s(x,y),s).
\]
There is a partition $\mathcal Q=\{Q_j\}_{j=1}^{24}$ of $\mathcal X[0,\frac{1}{2}]$ such that each $Q_j$ is a maximal subset of $\mathcal X[0,\frac{1}{2}]$ where $F$ is entirely defined and continuous. Next, we construct a finer partition $\mathcal Q'$ of $\mathcal X([0,\frac{1}{2}])$ as follows: 
\begin{itemize}
\item If there exists some $Q_i\in \mathcal Q$ such that $\phi(Q_i) \subset P_j$, then the polyhedron $Q_i$ is an element in $\mathcal Q'$.
\item If there exists some $Q_i \in \mathcal Q$ such that $\phi(Q_i) \supset P_j$, then the polyhedron $\phi(P_j)$ is an element in $\mathcal Q'$.
\end{itemize}
$\mathcal Q'$ is partitioned into 26 elements and the bundle map $F$ is well-defined on each $Q'_{k} \in \mathcal Q'$. Then, we check that the following properties hold:
\begin{enumerate}
\item  For each $Q'_k \in \mathcal Q'$, there exists some $P_i \in \mathcal P$ such that
\[
\phi(Q'_k) \subset P_i.
\]
\item 
\[
\phi \circ F(Q'_{k}) \subset  F(P_i).
\]
\item  
\[
\mbox{Int}(F(Q'_{k})) \cap \mbox{Int}(F(Q'_{l}))=\emptyset, \quad \mbox{if $k\neq l$}.
\] 
\item 
\[
\sum_{j=1}^{26}\mbox{volume}(F(Q'_k))=\sum_{i=1}^{22}\mbox{volume}(P_i).
\]
\end{enumerate}
Thus, we show that the tetrahedral PET $\tilde f_s$ on $\tilde \Delta$ is renormalizable when $s \in [\frac{1}{2},1)$.

\section{The Computational Data}
The 26 non-secondary maximal polyhedron of $\mathcal Y(\bar A_{2,3})$ are listed as follows: 
{\tiny
\[
P_0=\begin{pmatrix}
-1/8 \\ 1/24 \cdot \sqrt 3\\ 5/12
\end{pmatrix}
\begin{pmatrix}
-1/6 \\ 0 \\ 5/12
\end{pmatrix}
\begin{pmatrix}
-2/17 \\ 1/17 \cdot \sqrt 3\\ 7/17
\end{pmatrix}
\begin{pmatrix}
-3/17 \\ 1/17 \cdot \sqrt 3\\ 7/17
\end{pmatrix}, \quad \iota(P_0)
\]

\[
P_1 = \begin{pmatrix}
-2/3 \\ 1/3 \cdot \sqrt 3\\ 5/12
\end{pmatrix}
\begin{pmatrix}
-5/8 \\ 3/8 \cdot \sqrt 3\\ 5/12
\end{pmatrix}
\begin{pmatrix}
-7/12 \\ 1/3 \cdot \sqrt 3\\ 5/12
\end{pmatrix}
\begin{pmatrix}
-11/17 \\ 6/17 \cdot \sqrt 3\\ 7/17
\end{pmatrix}, \quad \iota(P_1)
\]

\[
P_{2}=\begin{pmatrix}
-11/12 \\ 5/12 \cdot \sqrt 3\\ 5/12
\end{pmatrix}
\begin{pmatrix}
-5/6 \\ 5/12 \cdot \sqrt 3\\ 5/12
\end{pmatrix}
\begin{pmatrix}
-16/17 \\ 7/17 \cdot \sqrt 3\\ 7/17
\end{pmatrix}
\begin{pmatrix}
-31/34 \\ 13/34 \cdot \sqrt 3\\ 7/17
\end{pmatrix}, \quad \iota(P_{2})
\]

\[
P_{3}=\begin{pmatrix}
-2/3 \\ 5/12 \cdot \sqrt 3\\ 5/12
\end{pmatrix}
\begin{pmatrix}
-17/24 \\ 11/24 \cdot \sqrt 3\\ 5/12
\end{pmatrix}
\begin{pmatrix}
-11/17 \\ 7/17 \cdot \sqrt 3\\ 7/17
\end{pmatrix}
\begin{pmatrix}
-12/17 \\ 7/17 \cdot \sqrt 3\\ 7/17
\end{pmatrix}, \quad \iota(P_{3})
\]

\[
P_{4}=\begin{pmatrix}
-2/3 \\ 5/12 \cdot \sqrt 3\\ 5/12
\end{pmatrix}
\begin{pmatrix}
-17/24 \\ 11/24 \cdot \sqrt 3\\ 5/12
\end{pmatrix}
\begin{pmatrix}
-3/4 \\ 5/12 \cdot \sqrt 3\\ 5/12
\end{pmatrix}
\begin{pmatrix}
-12/17 \\ 7/17 \cdot \sqrt 3\\ 7/17
\end{pmatrix}, \quad \iota(P_{4})
\]

\[
P_{5}=\begin{pmatrix}
3/8 \\ 1/24 \cdot \sqrt 3\\ 5/12
\end{pmatrix}
\begin{pmatrix}
5/12 \\ 1/12 \cdot \sqrt 3\\ 5/12
\end{pmatrix}
\begin{pmatrix}
6/17 \\ 1/17 \cdot \sqrt 3\\ 7/17
\end{pmatrix}
\begin{pmatrix}
13/34 \\ 1/34 \cdot \sqrt 3\\ 7/17
\end{pmatrix}, \quad \iota(P_{5})
\]

\[
P_{6}=\begin{pmatrix}
3/8 \\ 1/24 \cdot \sqrt 3\\ 5/12
\end{pmatrix}
\begin{pmatrix}
1/3 \\ 1/12 \cdot \sqrt 3\\ 5/12
\end{pmatrix}
\begin{pmatrix}
5/12 \\ 1/12 \cdot \sqrt 3\\ 5/12
\end{pmatrix}
\begin{pmatrix}
6/17 \\ 1/17 \cdot \sqrt 3\\ 7/17
\end{pmatrix}, \quad \iota(P_{6})
\]

\[
P_7=\begin{pmatrix}
-11/12 \\ 5/12 \cdot \sqrt 3\\ 5/12
\end{pmatrix}
\begin{pmatrix}
-33/34 \\ 13/34 \cdot \sqrt 3\\ 7/17
\end{pmatrix}
\begin{pmatrix}
-16/17 \\ 7/17 \cdot \sqrt 3\\ 7/17
\end{pmatrix}
\begin{pmatrix}
-31/34 \\ 13/34 \cdot \sqrt 3\\ 7/17
\end{pmatrix}
\begin{pmatrix}
-21/22 \\ 4/11 \cdot \sqrt 3\\ 9/22
\end{pmatrix}
\begin{pmatrix}
-43/44 \\ 17/44 \cdot \sqrt 3\\ 9/22
\end{pmatrix}, \quad \iota(P_7)
\]

\[
P_8=\begin{pmatrix}
-17/24 \\ 11/24 \cdot \sqrt 3\\ 5/12
\end{pmatrix}
\begin{pmatrix}
-12/17 \\ 7/17 \cdot \sqrt 3\\ 7/17
\end{pmatrix}
\begin{pmatrix}
-11/17 \\ 7/17 \cdot \sqrt 3\\ 7/17
\end{pmatrix}
\begin{pmatrix}
-23/34\\ 15/34 \cdot \sqrt 3\\ 7/17
\end{pmatrix}
\begin{pmatrix}
-15/22 \\ 9/22 \cdot \sqrt 3\\ 9/22
\end{pmatrix}
\begin{pmatrix}
-7/11 \\ 9/22 \cdot \sqrt 3\\ 9/22
\end{pmatrix}, \quad \iota(P_8)
\]

\[
P_9= \begin{pmatrix}
-2/3\\ 1/3 \cdot \sqrt 3\\ 5/12
\end{pmatrix}
\begin{pmatrix}
-7/12 \\ 1/3 \cdot \sqrt 3\\ 5/12
\end{pmatrix}
\begin{pmatrix}
-11/17 \\ 6/17 \cdot \sqrt 3\\ 7/17
\end{pmatrix}
\begin{pmatrix}
-21/34 \\ 11/34 \cdot \sqrt 3\\ 7/17
\end{pmatrix}
\begin{pmatrix}
-29/44 \\ 15/44 \cdot \sqrt 3\\ 9/22
\end{pmatrix}
\begin{pmatrix}
-15/22 \\ 7/22 \cdot \sqrt 3\\ 9/22
\end{pmatrix}, \quad \iota(P_9)
\]

\[
P_{10}=\begin{pmatrix}
-1/8 \\ 1/24 \cdot \sqrt 3\\ 5/12
\end{pmatrix}
\begin{pmatrix}
-5/34 \\ 3/34 \cdot \sqrt 3\\ 7/17
\end{pmatrix}
\begin{pmatrix}
-2/17 \\ 1/17 \cdot \sqrt 3\\ 7/17
\end{pmatrix}
\begin{pmatrix}
-3/17 \\ 1/17 \cdot \sqrt 3\\ 7/17
\end{pmatrix}
\begin{pmatrix}
-2/11 \\ 1/11 \cdot \sqrt 3\\ 9/22
\end{pmatrix}
\begin{pmatrix}
-3/22 \\ 1/11 \cdot \sqrt 3\\ 9/22
\end{pmatrix}, \quad \iota(P_{10})
\]

\[
P_{11}=\begin{pmatrix}
5/12 \\ 1/12 \cdot \sqrt 3\\ 5/12
\end{pmatrix}
\begin{pmatrix}
6/17 \\ 1/17 \cdot \sqrt 3\\ 7/17
\end{pmatrix}
\begin{pmatrix}
13/34 \\ 1/34 \cdot \sqrt 3\\ 7/17
\end{pmatrix}
\begin{pmatrix}
7/17 \\ 1/17 \cdot \sqrt 3\\ 7/17
\end{pmatrix}
\begin{pmatrix}
4/11 \\ 1/22 \cdot \sqrt 3\\ 9/22
\end{pmatrix}
\begin{pmatrix}
17/44 \\ 1/44 \cdot \sqrt 3\\ 9/22
\end{pmatrix}, \quad \iota(P_{11})
\]

\[
P_{12}=\begin{pmatrix}
1/4 \\ 1/6 \cdot \sqrt 3\\ 5/12
\end{pmatrix}
\begin{pmatrix}
7/24 \\ 1/8 \cdot \sqrt 3\\ 5/12
\end{pmatrix}
\begin{pmatrix}
3/17 \\ 3/17 \cdot \sqrt 3\\ 7/17
\end{pmatrix}
\begin{pmatrix}
4/17 \\ 3/17 \cdot \sqrt 3\\ 7/17
\end{pmatrix}
\begin{pmatrix}
2/11 \\ 2/11 \cdot \sqrt 3\\ 9/22
\end{pmatrix}
\begin{pmatrix}
7/44 \\ 7/44 \cdot \sqrt 3\\ 9/22
\end{pmatrix}, \quad \iota(P_{12}).
\]
}

Here are 26 non-secondary maximal polyhedron of $\mathcal Z(A_{2,3})$.
{\tiny
\[
Q_0=\begin{pmatrix}
-17/28 \\ 1/4 \cdot \sqrt 3\\ 29/70
\end{pmatrix}
\begin{pmatrix}
-43/70 \\ 17/70 \cdot \sqrt 3\\ 29/70
\end{pmatrix}
\begin{pmatrix}
-20/33 \\ 25/99 \cdot \sqrt 3 \\ 41/99
\end{pmatrix}
\begin{pmatrix}
-61/99 \\ 25/99 \cdot \sqrt 3 \\ 41/99
\end{pmatrix}, \quad \iota(Q_0)
\]

\[
Q_1=\begin{pmatrix}
-2/3 \\ 1/3\cdot \sqrt 3 \\ 5/12
\end{pmatrix}
\begin{pmatrix}
-5/8 \\ 3/8 \cdot \sqrt 3 \\ 5/12
\end{pmatrix}
\begin{pmatrix}
-7/12 \\ 1/3 \cdot \sqrt 3 \\ 5/12
\end{pmatrix}
\begin{pmatrix}
-11/17 \\ 6/17 \cdot \sqrt 3 \\ 7/17
\end{pmatrix}, \quad \iota(Q_1)
\]

\[
Q_2 = \begin{pmatrix}
-69/70 \\ 1/14 \cdot \sqrt 3 \\ 29/70
\end{pmatrix}
\begin{pmatrix}
-34/35 \\ 1/14 \cdot \sqrt 3\\ 29/70
\end{pmatrix}
\begin{pmatrix}
-98/99 \\ 7/99 \cdot \sqrt 3\\ 41/99
\end{pmatrix}
\begin{pmatrix}
-65/66 \\ 13/198 \cdot \sqrt 3\\ 41/99
\end{pmatrix}, \quad \iota(Q_2)
\]

\[
Q_3=\begin{pmatrix}
-33/35 \\ 5/12 \cdot \sqrt 3 \\ 5/12
\end{pmatrix}
\begin{pmatrix}
-17/24 \\ 11/24 \cdot \sqrt 3\\ 5/12
\end{pmatrix}
\begin{pmatrix}
-12/17 \\ 7/17 \cdot \sqrt 3\\ 7/17
\end{pmatrix}
\begin{pmatrix}
-11/17 \\ 7/17 \cdot \sqrt 3\\ 7/17
\end{pmatrix}, \quad \iota(Q_3)
\]

\[
Q_4=\begin{pmatrix}
-33/35 \\ 1/14 \cdot \sqrt 3 \\ 29/70
\end{pmatrix}
\begin{pmatrix}
-19/20 \\ 11/140 \cdot \sqrt 3 \\ 29/70
\end{pmatrix}
\begin{pmatrix}
-67/70 \\ 1/14 \cdot \sqrt 3 \\ 29/70
\end{pmatrix}
\begin{pmatrix}
-94/99 \\ 7/99 \cdot \sqrt 3 \\ 41/99
\end{pmatrix}, \quad \iota(Q_4)
\]

\[
Q_5=\begin{pmatrix}
9/140 \\ 1/140 \cdot \sqrt 3 \\ 29/70
\end{pmatrix}
\begin{pmatrix}
1/14 \\ 1/70 \cdot \sqrt 3 \\ 29/70
\end{pmatrix}
\begin{pmatrix}
2/33 \\ 1/99 \cdot \sqrt 3 \\ 41/99
\end{pmatrix}
\begin{pmatrix}
13/198 \\ 1/198 \cdot \sqrt 3 \\ 41/99
\end{pmatrix}, \quad \iota(Q_5)
\]

\[
Q_6=\begin{pmatrix}
9/140 \\ 1/140 \cdot \sqrt 3\\ 29/70
\end{pmatrix}
\begin{pmatrix}
2/35 \\ 1/140\cdot \sqrt 3 \\ 29/70
\end{pmatrix}
\begin{pmatrix}
1/14 \\ 1/70 \cdot \sqrt 3 \\ 29/70
\end{pmatrix}
\begin{pmatrix}
2/33 \\ 1/99 \cdot \sqrt 3 \\ 41/99
\end{pmatrix}, \quad \iota(Q_6)
\]

\[
Q_7=\begin{pmatrix}
-69/70 \\ 1/14 \cdot \sqrt 3\\ 29/70
\end{pmatrix}
\begin{pmatrix}
-197 \\ 13/198 \cdot \sqrt 3\\ 41/99
\end{pmatrix}
\begin{pmatrix}
-98/99 \\ 7/99 \cdot \sqrt 3\\ 41/99
\end{pmatrix}
\begin{pmatrix}
-65/66 \\ 13/198 \cdot \sqrt 3\\ 41/99
\end{pmatrix}
\begin{pmatrix}
-127/128 \\ 1/16 \cdot \sqrt 3\\ 53/128
\end{pmatrix}
\begin{pmatrix}
-255/256 \\ 17/256 \cdot \sqrt 3 \\ 53/128
\end{pmatrix}, \quad \iota(Q_7)
\]

\[
Q_8=\begin{pmatrix}
-19/20 \\ 11/140 \cdot \sqrt 3\\ 29/70
\end{pmatrix}
\begin{pmatrix}
-94/99 \\ 7/99 \cdot \sqrt 3 \\ 41/99
\end{pmatrix}
\begin{pmatrix}
-31/33 \\ 7/99 \cdot \sqrt 3 \\ 41/99
\end{pmatrix}
\begin{pmatrix}
-17/18 \\ 5/66 \cdot \sqrt 3 \\ 41/99
\end{pmatrix}
\begin{pmatrix}
-121/128 \\ 9/128 \cdot \sqrt 3 \\ 53/128
\end{pmatrix}
\begin{pmatrix}
\-15/16 \\ 9/128 \cdot \sqrt 3 \\ 53/128
\end{pmatrix}, \quad \iota(Q_8)
\]

\[
Q_9=\begin{pmatrix}
-7/10 \\ 3/10 \cdot \sqrt 3 \\ 29/70
\end{pmatrix}
\begin{pmatrix}
-24/35 \\ 3/10 \cdot \sqrt 3 \\ 29/70
\end{pmatrix}
\begin{pmatrix}
-23/33 \\ 10/33 \cdot \sqrt 3 \\ 41/99
\end{pmatrix}
\begin{pmatrix}
-137/198 \\ 59/198 \cdot \sqrt 3 \\ 41/99
\end{pmatrix}
\begin{pmatrix}
-179/256 \\ 77/256 \cdot \sqrt 3 \\ 53/128
\end{pmatrix}
\begin{pmatrix}
-45/64 \\ 19/64 \cdot \sqrt 3 \\ 53/128
\end{pmatrix}, \quad \iota(Q_9).
\]

\[
Q_{10}=\begin{pmatrix}
-17/28 \\ 1/4 \cdot \sqrt 3\\ 29/70 
\end{pmatrix}
\begin{pmatrix}
-11/18 \\ 17/66 \cdot \sqrt 3 \\ 41/99
\end{pmatrix}
\begin{pmatrix}
-20/33 \\ 25/99 \cdot \sqrt 3 \\ 41/99
\end{pmatrix}
\begin{pmatrix}
-61/99 \\ 25/99 \cdot \sqrt 3 \\ 41/99
\end{pmatrix}
\begin{pmatrix}
-79/128 \\ 33/128 \cdot \sqrt 3 \\ 53/128
\end{pmatrix}
\begin{pmatrix}
-39/64 \\ 33/128 \cdot \sqrt 3 \\ 53/128
\end{pmatrix}, \quad \iota(Q_{10})
\]

\[
Q_{11}=\begin{pmatrix}
1/14 \\ 1/70 \cdot \sqrt 3 \\ 29/70  
\end{pmatrix}
\begin{pmatrix}
2/33 \\ 1/99 \cdot \sqrt 3 \\ 41/99
\end{pmatrix}
\begin{pmatrix}
13/198 \\ 1/198 \cdot \sqrt 3 \\ 41/99
\end{pmatrix}
\begin{pmatrix}
7/99 \\ 1/99 \cdot \sqrt 3 \\ 41/99
\end{pmatrix}
\begin{pmatrix}
1/16 \\ 1/128 \cdot \sqrt 3\\ 53/128
\end{pmatrix}
\begin{pmatrix}
17/256 \\ 1/256 \cdot \sqrt 3 \\ 53/128
\end{pmatrix}, \quad \iota(Q_{11})
\]

\[
Q_{12}=\begin{pmatrix}
3/70 \\ 1/35 \cdot \sqrt 3 \\ 29/70
\end{pmatrix}
\begin{pmatrix}
1/20 \\ 3/140 \cdot \sqrt 3 \\  29/70
\end{pmatrix}
\begin{pmatrix}
1/33 \\ 1/33 \cdot \sqrt 3 \\ 41/99
\end{pmatrix}
\begin{pmatrix}
4/99 \\ 1/33 \cdot \sqrt 3\\ 41/99
\end{pmatrix}
\begin{pmatrix}
1/32 \\ 1/32 \cdot \sqrt 3 \\ 53/128
\end{pmatrix}
\begin{pmatrix}
7/256  \\ 7/256 \cdot \sqrt 3 \\ 53/128
\end{pmatrix}, \quad \iota(Q_{12}).
\]
}
The 16 newly-appeared maximal domain $P$ at parameter $t=t_{2,3}$ mentioned in section 4.3 are listed as follows:
{\tiny
\[
P_0 = \begin{pmatrix}
-16/17 \\ 7/17 \cdot \sqrt 3 \\ 7/17
\end{pmatrix}
\begin{pmatrix}
-15/17 \\ 7/17 \cdot \sqrt 3 \\ 7/17
\end{pmatrix}
 \begin{pmatrix}
-10/11 \\ 9/22 \cdot \sqrt 3 \\ 9/22
\end{pmatrix}
 \begin{pmatrix}
-21/22 \\ 9/22 \cdot \sqrt 3 \\ 9/22
\end{pmatrix}
 \begin{pmatrix}
-9/10 \\ 3/10 \cdot \sqrt 3 \\ 2/5
\end{pmatrix}, \quad \iota(P_0)
\]

\[
P_1 = \begin{pmatrix}
-16/17 \\ 7/17 \cdot \sqrt 3 \\ 7/17
\end{pmatrix}
\begin{pmatrix}
-31/34 \\ 13/34 \cdot \sqrt 3 \\ 7/17
\end{pmatrix}
\begin{pmatrix}
-21/22 \\ 4/11 \cdot \sqrt 3 \\ 9/22
\end{pmatrix}
\begin{pmatrix}
-43/44 \\ 17/44 \cdot \sqrt 3 \\ 9/22
\end{pmatrix}
\begin{pmatrix}
-9/10 \\ 3/10 \cdot \sqrt 3 \\ 2/5
\end{pmatrix}, \quad \iota(P_1)
\]

\[
P_2 = \begin{pmatrix}
-16/17 \\ 7/17 \cdot \sqrt 3 \\ 7/17
\end{pmatrix}
\begin{pmatrix}
-43/44 \\ 17/44 \cdot \sqrt 3 \\ 9/22
\end{pmatrix}
\begin{pmatrix}
-21/22 \\ 9/22 \cdot \sqrt 3 \\ 9/22
\end{pmatrix}
\begin{pmatrix}
-53/54 \\ 7/18 \cdot \sqrt 3 \\ 11/27
\end{pmatrix}
\begin{pmatrix}
-9/10 \\ 3/10 \cdot \sqrt 3 \\ 2/5
\end{pmatrix}, \quad \iota(P_0)
\]

\[
P_3 = \begin{pmatrix}
-2/17 \\ 1/17 \cdot \sqrt 3 \\ 7/17
\end{pmatrix}
\begin{pmatrix}
-3/17 \\ 1/17 \cdot \sqrt 3 \\ 7/17
\end{pmatrix}
\begin{pmatrix}
-2/11 \\ 1/11 \cdot \sqrt 3 \\ 9/22
\end{pmatrix}
\begin{pmatrix}
-3/22 \\ 1/11 \cdot \sqrt 3 \\ 9/22
\end{pmatrix}
\begin{pmatrix}
-5/44 \\ 3/44 \cdot \sqrt 3 \\ 9/22
\end{pmatrix} 
\begin{pmatrix}
-7/54 \\ 5/54 \cdot \sqrt 3 \\ 11/27
\end{pmatrix} 
\begin{pmatrix}
-3/10 \\ 1/10 \cdot \sqrt 3 \\ 2/5
\end{pmatrix}, \quad \iota(P_3)
\]

\[
P_4 = \begin{pmatrix}
4/17 \\ 3/17 \cdot \sqrt 3 \\ 7/17
\end{pmatrix}
\begin{pmatrix}
9/34 \\ 5/34 \cdot \sqrt 3 \\ 7/17
\end{pmatrix}
\begin{pmatrix}
2/11 \\ 2/11 \cdot \sqrt 3 \\ 9/22
\end{pmatrix}
\begin{pmatrix}
5/22 \\ 2/11 \cdot \sqrt 3 \\ 9/22
\end{pmatrix}
\begin{pmatrix}
1/4 \\ 7/44 \cdot \sqrt 3 \\ 9/22
\end{pmatrix}
\begin{pmatrix}
5/27 \\ 5/27 \cdot \sqrt 3 \\ 11/27
\end{pmatrix} 
\begin{pmatrix}
1/10 \\ 1/10 \cdot \sqrt 3 \\ 2/5
\end{pmatrix}, \quad \iota(P_4)
\]
}

{\tiny
\[
P_5 = \begin{pmatrix}
-23/34 \\ 15/34 \cdot \sqrt 3 \\ 7/17
\end{pmatrix}
\begin{pmatrix}
-15/22 \\ 9/22 \cdot \sqrt 3 \\ 9/22
\end{pmatrix}
\begin{pmatrix}
-7/11 \\ 9/22 \cdot \sqrt 3 \\ 9/22
\end{pmatrix}
\begin{pmatrix}
-29/44 \\ 19/44 \cdot \sqrt 3 \\ 9/22
\end{pmatrix}
\begin{pmatrix}
-2/3 \\ 11/27 \cdot \sqrt 3 \\ 11/27
\end{pmatrix}, \quad \iota(P_5)
\]

\[
P_6 = \begin{pmatrix}
-21/34 \\ 11/34 \cdot \sqrt 3 \\ 7/17
\end{pmatrix}
\begin{pmatrix}
-29/44 \\ 15/44 \cdot \sqrt 3 \\ 9/22
\end{pmatrix}
\begin{pmatrix}
-15/22 \\ 7/22 \cdot \sqrt 3 \\ 9/22
\end{pmatrix}
\begin{pmatrix}
-7/11 \\ 7/22 \cdot \sqrt 3 \\ 9/22
\end{pmatrix}
\begin{pmatrix}
-2/3 \\ 1/3 \cdot \sqrt 3 \\ 11/27
\end{pmatrix}, \quad \iota(P_6)
\]

\[
P_7 = \begin{pmatrix}
7/17 \\ 1/17 \cdot \sqrt 3 \\ 7/17
\end{pmatrix}
\begin{pmatrix}
4/11 \\ 1/22 \cdot \sqrt 3 \\ 9/22
\end{pmatrix}
\begin{pmatrix}
17/44 \\ 1/44 \cdot \sqrt 3 \\ 9/22
\end{pmatrix}
\begin{pmatrix}
9/22 \\ 1/22 \cdot \sqrt 3 \\ 9/22
\end{pmatrix}
\begin{pmatrix}
10/27 \\ 1/27 \cdot \sqrt 3 \\ 11/27
\end{pmatrix}, \quad \iota(P_7)
\]
}

The 16 newly-appeared maximal domain $Q$ in $\mathcal Z(A_{2,4})$ at $s_{2,3}$ are the following:
{\tiny
\[
Q_0 = \begin{pmatrix}
-98/99 \\ 7/99 \cdot \sqrt 3 \\ 41/99
\end{pmatrix}
\begin{pmatrix}
-97/99 \\ 7/99 \cdot \sqrt 3 \\ 41/99
\end{pmatrix}
\begin{pmatrix}
-63/64 \\ 9/128 \cdot \sqrt 3 \\ 53/128
\end{pmatrix}
\begin{pmatrix}
-127/128 \\ 9/128 \cdot \sqrt 3 \\ 53/128
\end{pmatrix}
\begin{pmatrix}
-57/58 \\ 3/58 \cdot \sqrt 3 \\ 12/29
\end{pmatrix}, \quad \iota(Q_0)
\]

\[
Q_1=\begin{pmatrix}
-98/99 \\ 7/99 \cdot \sqrt 3 \\ 41/99
\end{pmatrix}
\begin{pmatrix}
-65/66 \\ 13/198 \cdot \sqrt 3 \\ 41/99
\end{pmatrix}
\begin{pmatrix}
-1 \\ 17/256 \cdot \sqrt 3 \\ 53/128
\end{pmatrix}
\begin{pmatrix}
-127/128 \\ 1/16 \cdot \sqrt 3 \\ 53/128
\end{pmatrix}
\begin{pmatrix}
-57/58 \\ 3/58 \cdot \sqrt 3 \\ 12/29
\end{pmatrix}, \quad \iota(Q_1)
\]

\[
Q_2=\begin{pmatrix}
-98/99 \\ 7/99 \cdot \sqrt 3 \\ 41/99
\end{pmatrix}
\begin{pmatrix}
-1 \\ 17/256 \cdot \sqrt 3 \\ 53/128
\end{pmatrix}
\begin{pmatrix}
-127/128\\ 9/128 \cdot \sqrt 3 \\ 53/128
\end{pmatrix}
\begin{pmatrix}
-127/128 \\ 21/314 \cdot \sqrt 3 \\ 65/157
\end{pmatrix}
\begin{pmatrix}
-57/58 \\ 3/58 \cdot \sqrt 3 \\ 12/29
\end{pmatrix}, \quad \iota(Q_2)
\]

\[
Q_3=\begin{pmatrix}
-61/99 \\ 25/99 \cdot \sqrt 3 \\ 41/99
\end{pmatrix}
\begin{pmatrix}
-20/33 \\ 25/99 \cdot \sqrt 3 \\ 41/99
\end{pmatrix}
\begin{pmatrix}
-79/128 \\ 33/128 \cdot \sqrt 3 \\ 53/128
\end{pmatrix}
\begin{pmatrix}
-39/64 \\ 33/128 \cdot \sqrt 3 \\ 53/128
\end{pmatrix}
\begin{pmatrix}
-191/314 \\ 81/314 \cdot \sqrt 3 \\ 65/157
\end{pmatrix}
\begin{pmatrix}
-37/158 \\ 15/58 \cdot \sqrt 3 \\ 12/29
\end{pmatrix}, \quad (Q_3)
\]

\[
Q_4=\begin{pmatrix}
4/99 \\ 1/13 \cdot \sqrt 3 \\ 41/99
\end{pmatrix}
\begin{pmatrix}
1/22 \\ 5/198 \cdot \sqrt 3 \\ 41/99
\end{pmatrix}
\begin{pmatrix}
1/32 \\ 1/32 \cdot \sqrt 3 \\ 53/128
\end{pmatrix}
\begin{pmatrix}
11/256 \\ 7/256 \cdot \sqrt 3 \\ 53/128
\end{pmatrix}
\begin{pmatrix}
5/128 \\ 1/32 \cdot \sqrt 3 \\ 53/128
\end{pmatrix}
\begin{pmatrix}
5/157 \\ 5/157 \cdot \sqrt 3 \\ 65/157
\end{pmatrix}
\begin{pmatrix}
1/58 \\ 1/58 \cdot \sqrt 3 \\ 12/29
\end{pmatrix}, \quad \iota(Q_4).
\]

\[
Q_5=\begin{pmatrix}
-17/18 \\ 5/66 \cdot \sqrt 3 \\ 41/99
\end{pmatrix}
\begin{pmatrix}
-15/16 \\ 9/128 \cdot \sqrt 3 \\ 53/128
\end{pmatrix}
\begin{pmatrix}
-121/128 \\ 9/128 \cdot \sqrt 3 \\ 53/128
\end{pmatrix}
\begin{pmatrix}
-241/256 \\ 19/256 \cdot \sqrt 3 \\ 53/128
\end{pmatrix}
\begin{pmatrix}
-148/157 \\ 11/157 \cdot \sqrt 3 \\ 65/157
\end{pmatrix}, \quad \iota(Q_5)
\]

\[
Q_6=\begin{pmatrix}
-137/198 \\ 59/198 \cdot \sqrt 3 \\ 41/99
\end{pmatrix}
\begin{pmatrix}
-45/64 \\ 19/64 \cdot \sqrt 3 \\ 53/128
\end{pmatrix}
\begin{pmatrix}
-179/256 \\ 77/256 \cdot \sqrt 3 \\ 53/128
\end{pmatrix}
\begin{pmatrix}
-89/128 \\ 19/64 \cdot \sqrt 3 \\ 53/128
\end{pmatrix}
\begin{pmatrix}
-110/157 \\ 47/157 \cdot \sqrt 3 \\ 65/157
\end{pmatrix}, \quad \iota(Q_6)
\]

\[
Q_7 = \begin{pmatrix}
-7/99 \\ -1/99 \cdot \sqrt 3 \\ 41/99
\end{pmatrix}
\begin{pmatrix}
-17/256 \\ -1/256 \cdot \sqrt 3 \\ 53/128
\end{pmatrix}
\begin{pmatrix}
-1/16 \\ -1/128 \cdot \sqrt 3 \\ 53/128
\end{pmatrix}
\begin{pmatrix}
-9/128 \\ -1/128 \cdot \sqrt 3 \\ 53/128
\end{pmatrix}
\begin{pmatrix}
-10/157\\ -1/157 \cdot \sqrt 3 \\ 65/157
\end{pmatrix}, \quad \iota(Q_7)
\]
}

\vspace{2em}

\end{document}